
\documentclass{article}
%%%%%%%%%%%%%%%%%%%%%%%%%%%%%%%%%%%%%%%%%%%%%%%%%%%%%%%%%%%%%%%%%%%%%%%%%%%%%%%%%%%%%%%%%%%%%%%%%%%%%%%%%%%%%%%%%%%%%%%%%%%%%%%%%%%%%%%%%%%%%%%%%%%%%%%%%%%%%%%%%%%%%%%%%%%%%%%%%%%%%%%%%%%%%%%%%%%%%%%%%%%%%%%%%%%%%%%%%%%%%%%%%%%%%%%%%%%%%%%%%%%%%%%%%%%%
\usepackage{eurosym}
\usepackage{amssymb}
\usepackage{amsfonts, color}
\usepackage{graphicx}
\usepackage{amsmath}
\usepackage{verbatim}
\usepackage{enumitem}
\setcounter{MaxMatrixCols}{10}
%TCIDATA{OutputFilter=LATEX.DLL}
%TCIDATA{Version=5.50.0.2890}
%TCIDATA{<META NAME="SaveForMode" CONTENT="1">}
%TCIDATA{BibliographyScheme=Manual}
%TCIDATA{LastRevised=Friday, December 20, 2019 19:40:55}
%TCIDATA{<META NAME="GraphicsSave" CONTENT="32">}
%TCIDATA{Language=American English}
%TCIDATA{CSTFile=LaTeX article.cst}

\providecommand{\U}[1]{\protect\rule{.1in}{.1in}}
\newtheorem{theorem}{Theorem}[section]

\newtheorem{corollary}[theorem]{Corollary}

\newtheorem{definition}[theorem]{Definition}

\newtheorem{lemma}[theorem]{Lemma}

\newtheorem{proposition}[theorem]{Proposition}
\newtheorem{remark}[theorem]{Remark}

\newenvironment{proof}[1][Proof]{\textbf{#1.} }{\ \rule{0.5em}{0.5em}}

\newcommand{\HH}{\mathbb{H}^2}

\newcommand{\hh}{\mathbb{H}^2}
\newcommand{\R}{\mathbb{R}}
\newcommand{\pO}{\partial\Omega}
\begin{document}

\title{CMC Graphs With Planar Boundary in $\mathbb{H}^{2}\times \mathbb{R}$}
\author{Ari J. Aiolfi, Patr\'icia Klaser}
\maketitle

\begin{abstract}
Let $\Omega \subset \mathbb{R}^{2}$ an unbounded convex domain and $H>0$ be given,
there exists a graph $G\subset \mathbb{R}^{3}$ of constant mean curvature $H$
over $\Omega $ with $\partial G=$ $\partial \Omega $ if and only if $\Omega $
is included in a strip of width $1/H$ \cite{L,R}. In this paper we obtain
results in $\mathbb{H}^{2}\times \mathbb{R}$ in the same direction: given $H\in \left( 0,1/2\right) $, if $\Omega $ is included in a region of $\mathbb{
H}^{2}\times \left\{ 0\right\} $ bounded by two equidistant hypercycles $\ell(H)$ apart, we
show that, if the geodesic curvature of $\partial \Omega $ is bounded from below by $-1,$ then there is an $H$-graph $G$ over $\Omega $ with $\partial G=\partial \Omega$. We also present more refined existence results involving the curvature of $\pO,$ which can also be less than $-1.$

\vspace{.15cm}
\noindent{\it MSC2010:} 
Primary 53A10, Secondary 53C42.

\end{abstract}

\section{Introduction}

Surfaces of constant mean curvature (cmc) in Riemannian manifolds are a classical subject in Differential Geometry. There are many existence results on closed cmc surfaces and on cmc surfaces with boundary (Plateau problem). The Plateau problem becomes a Dirichlet PDE problem if one searches surfaces that are graphs. In this paper we deal with existence results of cmc graphs with boundary included in a plane $\HH\times\{0\}$ in the manifold $\HH\times\R.$

It is quite natural that existence results for graphs with planar boundary can be
obtained with weaker hypotheses than the ones for the Dirichlet problem for
any continuous boundary data. In the Euclidean case, for example, given a
bounded convex $C^{2,\alpha }$ domain $\Omega \subset \mathbb{R}^{2}$,
there is a cmc graph with boundary $\partial \Omega$ if the curvature of $\partial \Omega $ $(k_{\partial \Omega })$ is greater than $H$, while the existence of a
cmc $H$-graph over $\Omega $ taking any continuous boundary data is only
guaranteed by $k _{\partial \Omega }$ bounded from below by $2H$.
Besides, the \textit{a piori} height estimate $|u|<a$ for some $a<1/2H$ also guarantees the existence of cmc graphs with vanishing boundary data in bounded convex $C^{2,\alpha}$ domains (Theorem 3 of \cite{R}). As a consequence of this
result, we draw attention to the following result about unbounded convex
domains:

\begin{theorem}[Theorem 1.2 and 1.4 of \cite{L} or Corollary 3 of \cite{R}]\label{theoEuc}
The\newline Dirichlet problem with zero boundary data associated to the cmc equation can be solved for convex domains included in a strip of width $1/H.$
\end{theorem}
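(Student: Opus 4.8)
The plan is to reduce the statement to the a~priori height estimate already discussed just before it, and then to invoke Theorem~3 of \cite{R}. Recall that for a bounded convex $C^{2,\alpha}$ domain the existence of an $H$-graph with zero boundary data is guaranteed as soon as one has a bound $|u|<a$ with $a<1/2H$ valid along the whole continuity family of solutions. Thus the entire problem is to produce such a bound for convex domains sitting inside a strip of width $1/H$, and then to remove the boundedness and smoothness hypotheses, since the domains allowed by the statement may be unbounded and may reach the two edges of the strip.

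First I would establish the height estimate. After a rigid motion I may assume the strip is $\{(x,y):|y|\le 1/2H\}$, and the relevant barrier is the half-cylinder of radius $1/2H$, namely the graph $w(y)=\sqrt{R^{2}-y^{2}}$. A direct computation gives $\dive(\nabla w/\sqrt{1+|\nabla w|^{2}})=-1/R$, so $w$ is an upper barrier for the $H$-graph equation for any admissible $R$, while $-w$ is a lower barrier whenever $R\le 1/2H$. Since $\Omega$ lies in the strip we always have $\rho:=\sup_{\Omega}|y|\le 1/2H$, and taking $R=\rho$ the comparison principle yields $|u|\le\sqrt{\rho^{2}-y^{2}}\le\rho$. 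For a bounded convex domain whose closure lies in the open strip, compactness gives the strict inequality $\rho<1/2H$, hence $|u|\le\rho<1/2H$ uniformly for every member of the continuity family, and Theorem~3 of \cite{R} applies to produce the $H$-graph over every such domain.

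It remains to pass from bounded convex domains strictly inside the strip to an arbitrary convex $\Omega$ included in the closed strip of width $1/H$. I would exhaust $\Omega$ by an increasing sequence of bounded convex domains $\Omega_{n}$ with $C^{2,\alpha}$ boundary and $\overline{\Omega_{n}}$ contained in the open strip, solve the problem on each $\Omega_{n}$ by the previous step, and obtain solutions $u_{n}$ with the uniform height bound $|u_{n}|\le 1/2H$. This uniform bound feeds the interior gradient estimate for the $H$-graph equation, and interior Schauder estimates then give local $C^{2,\alpha}$ bounds independent of $n$; an Arzel\`a--Ascoli/diagonal extraction produces a solution $u$ on $\Omega$ with $|u|\le 1/2H$.

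The step I expect to be the main obstacle is showing that the limit $u$ attains the zero boundary data on all of $\pO$, including the portions where $\pO$ meets the edges of the strip and, for unbounded $\Omega$, the behaviour at infinity. Here convexity is essential: at each boundary point the supporting line of $\Omega$ furnishes a local barrier that pins the solution to $0$ along $\pO$, and these barriers must be shown to survive the limit and to control the $u_{n}$ uniformly near the boundary. Making this continuity up to $\pO$ rigorous — in particular ensuring it is uniform enough to pass to the limit precisely where the domain touches the strip — is the delicate point of the argument.
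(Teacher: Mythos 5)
The paper never proves Theorem \ref{theoEuc}; it is quoted from \cite{L} and \cite{R}, and the intended argument --- cylinder height estimate, then Theorem 3 of \cite{R} on bounded convex $C^{2,\alpha}$ domains, then exhaustion --- is exactly the route you take (it is also the template the paper follows for its own Theorem \ref{MT} and Corollary \ref{C3n} in $\hh\times\R$). One small slip: your conditions on $R$ are swapped. Since $Q_H(w)=2H-1/R$ for $w=\sqrt{R^2-y^2}$, the graph $w$ is a supersolution (upper barrier) precisely when $R\le 1/2H$, while $-w$ is a subsolution for every $R$; alternatively $u\ge 0$ follows from the minimum principle alone, because $\mathrm{div}\bigl(\nabla u/\sqrt{1+|\nabla u|^2}\bigr)=-2H<0$. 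This does not affect the estimate $0\le u\le\rho$.

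The genuine gap is the one you flag yourself: boundary continuity of the limit. As sketched, ``the supporting line furnishes a local barrier'' is not yet an argument, because any barrier adapted to a supporting line $L$ at $p\in\pO$ lives over a sub-strip adjacent to $L$ and must be compared with $u_n$ on the \emph{inner} edge of that sub-strip, where $u_n$ is not zero; this far-edge comparison is the crux and you never address it. It can be closed as follows: take the quarter-cylinder $Q_p$ of radius $1/2H$ over the sub-strip of width $1/2H$ on the $\Omega$-side of $L$, i.e.\ $Q_p=\sqrt{t\left(1/H-t\right)}$ where $t$ is the distance to $L$; it vanishes (vertically) on $L$ and equals $1/2H$ on the far edge $\{t=1/2H\}$. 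Since each $\overline{\Omega_n}$ lies in the open strip, $\sup u_n\le\rho_n<1/2H$, so on the boundary of $\Omega_n\cap\{0<t<1/2H\}$ you have $u_n=0\le Q_p$ along $\partial\Omega_n$ and $u_n<1/2H=Q_p$ along the far edge; moreover $Q_p$ is smooth on $\overline{\Omega_n}$ because $\overline{\Omega_n}\subset\Omega$ is at positive distance from $L$. The comparison principle then gives $u_n\le Q_p$ there for \emph{every} $n$ with the \emph{same fixed} barrier, so the uniformity you worry about is not needed: in the limit $0\le u\le Q_p$ near $p$, and $Q_p\to 0$ at $p$, hence $u\in C^0(\overline{\Omega})$ with $u|_{\pO}=0$. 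Note that the alternative mechanism the paper itself uses in Corollary \ref{C3n} (uniform bounds on $|\nabla u_n|$ up to $\partial\Omega_n$) cannot work in your generality: when $\Omega$ is the full strip the solution is the half-cylinder itself, whose gradient blows up at $\pO$, which is precisely why the vertical, untranslated barrier is the right tool at boundary points in the interior of the strip, while points on the strip edges are handled directly by $0\le u\le w$ with $w$ vanishing there.
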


In \cite{L}, it is also proved that if $\Omega$ is an unbounded convex domain and there exists a cmc $H$ graph over $\Omega$ with boundary $\pO,$ then $\Omega$ must be included in a strip of width $1/H.$ We are interested in generalize this results to $\hh\times\R,$ that is: Given $\Omega\subset \hh\times\{0\},$ when is there a cmc $H$ surface with boundary $\partial \Omega?$

The non existence result presented in \cite{L} relies on the compactness of the cmc $H$ sphere in $\R^3$ and on the structure of convex sets in the Euclidean plane. Unfortunately we could not obtain non existence results in $\hh\times \R.$

An interesting point about Theorem \ref{theoEuc} is that the lower bound on $k_{\pO}$ does not depend on $H.$ Such a result also holds in $\HH\times\R,$ see Corollary \ref{C1}, item \ref{corC1-iii}: Suppose $H\in (0,1/2)$ and let $\Omega \subset \mathbb{H}%
^{2} $ be a $C^{2}$ domain with $k_{\pO}\geq -1$ and $\overline{\Omega}$ contained in a region bounded by two hypercycles
which are equidistant $\ell=\ell(H) $ to a fixed geodesic, where $\ell
\left( H\right) $ is given by \eqref{iH}. Then there exists an $H-$graph with boundary $\pO.$

\

Let $\mathbb{H}^{2}$ be the hyperbolic plane, $\Omega
\subset \mathbb{H}^{2}$ a $C^{2,\alpha }$ domain and $H>0$, and consider the Dirichlet problem 
\begin{equation}
\left\{ 
\begin{array}{l}
Q_{H}\left( u\right) :=\mathrm{div}\left( \frac{\nabla u}{\sqrt{1+\left\vert
\nabla u\right\vert ^{2}}}\right) +2H=0 \text{ in } \Omega \\ 
u=0 \text{ on } \partial\Omega \text{ and } u\in C^2(\Omega)\cap C^0(\overline{\Omega}),%
\end{array}%
\right.  \label{DP}
\end{equation}%
%\textcolor{red}{quero tirar isso $u\in C^{2,\alpha }\left( \overline{\Omega}\right)$ pq falamos em sol $C^0.$ Mas nao sei como.} 
where $\nabla $ and $%
\mathrm{div}$ are the gradient and divergent in $\mathbb{H}^{2}$,
respectively. If $u$ is a solution of the Dirichlet problem \eqref{DP} then
the graph of $u$, denoted by $G\left( u\right)$ and oriented with normal vector pointing downwards, is a cmc $H$ surface in $\mathbb{H}^{2}\times \mathbb{R}$ with boundary $%
\partial G\left( u\right) \subset \mathbb{H}^{2}\times \left\{ 0\right\}
\equiv \mathbb{H}^{2}$.

We show that, considering a (possibly unbounded) domain $\Omega $ contained
in a region of $\mathbb{H}^{2}$ bounded by two equidistant hypercycles,
under hypotheses involving the distance between them, their curvatures, the
curvature of $\partial \Omega $ and assuming $H\in \left( 0,1/2\right) $,
problem \eqref{DP} is solvable (Corollaries \ref{C3n} and \ref{C1}).

These results are consequence of a technical theorem (Theorem \ref{MT}),
which was inspired in Theorem 3 of \cite{R} and, essentially states that for 
$H\in \left( 0,1/2\right) $ and $\Omega $ bounded, setting $\kappa
:=\inf_{\partial \Omega }k_{\partial \Omega }$, if any solution $u$ of %
\eqref{DP} satisfies the \textit{a priori} height estimate $\left\vert
u\right\vert \leq a<F\left( \kappa ,H\right) $ (see Remark \ref{rmk_FkH} for an expression for $F$), with an additional
hypothesis on $\Omega $ in the case $\kappa \leq -1$, then \eqref{DP} is
solvable. 
We observe that the case $\kappa >2H$ for bounded domains is already considered in
Theorem 1.5 of \cite{Sp}.

\

There are many height estimates for embedded compact $H$-surfaces $S\subset 
\mathbb{M}^{2}\times \mathbb{R}$, $H>0$, with boundary in a slice $\mathbb{M}%
^{2}\times \left\{ t_{0}\right\} $, given in terms of different hypotheses
such as the area of $S\cap \mathbb{M}^{2}\times \left\{ t\geq t_{0}\right\} $%
, the Gauss map of the immersion (see \cite{LM}, \cite{M} for the Euclidean
case) or the Gauss curvature of $\mathbb{M}^{2}$ (see \cite{HLR}, \cite{AEG}
and, for Hadamard manifolds, see \cite{LR} and generalization for warped
products in \cite{FPS}). Beautiful existence results for $H$-graphs in $%
\mathbb{R}^{3}$ with boundary in a plane were obtained as a consequence of
such estimates, with hypotheses on the length of $\partial \Omega $ or the
area of $\Omega $ (see, for example, Corollary 4 of \cite{LM} and Corollary
5 of \cite{R}).

We point out that, in the Euclidean case, the Dirichlet problem \eqref{DP} for
arbitrary continuous boundary data can be reduced to the zero boundary data (see \cite{R2}) and it is quite natural to expect similar reductions to work for $\mathbb{H}^{2}\times \mathbb{R}$.

\section{Preliminaries}

Given a domain $\Omega \subset \mathbb{R}^{2}$ with $k_{\partial
\Omega }>H$, by using a hemisphere as a barrier one estimates the height of
a solution to the Dirichlet problem \eqref{DP}. Afterwards, moving the
hemisphere slightly downwards and touching it at each point of $\partial
\Omega $, one estimates the gradient at the boundary. These estimates and
the classical PDE theory guarantee the existence of a constant mean
curvature $H$ graph over $\Omega $ with boundary $\partial \Omega $. A
natural way to improve this result would be to ask whether it holds for
unbounded domains, and therefore the hypothesis $k_{\partial \Omega }>H$
cannot be required anymore. Hence a surface that works as barrier must have
its boundary in $\mathbb{R}^{2}$ with curvature smaller than $H$. The
simplest non spherical cmc surface is the cylinder, which if we consider
having a horizontal axis is a bi-graph over a strip of width $1/H$. In \cite%
{R} and in \cite{L}, it is used as a barrier to prove the result stated in
the abstract.

Our main goal in this manuscript is to extend the result above to $\mathbb{H}%
^{2}\times \mathbb{R}$. Nevertheless $\mathbb{H}^{2}\times \mathbb{R}$ is
less symmetric than $\mathbb{R}^{3}$ and so are the surfaces that play the
role of the half cylinders. However, it is possible to find surfaces that
work as barriers in our context, which is done in the following lemmas. None of the surfaces described here is new in the literature, see for instance \cite{BRWE} and \cite{NR} among many others. We recall them here in order to fix some notation and describe some important properties. The authors believe that the least known property is an estimate on the size of the set where the barriers are positive (inequalities \eqref{eq-deltaineq} and \eqref{eq-Deltaineq} below), which was obtained in \cite{KST}.

\begin{lemma}
\label{dgeo}Let $H\in \left( 0,1/2\right) $ and $l>0$ be given. Let $\alpha
_{1}$ and $\alpha _{2}$ be two hypercycles in $\mathbb{H}^{2}$, equidistant $%
l$ of a given geodesic of $\mathbb{H}^{2}$. There is an $H$-graph $\mathfrak{%
G}=\mathfrak{G}\left( l,H\right) $ over the connected region in $\mathbb{H}%
^{2}$ bounded by $\alpha _{1}\cup \alpha _{2}$, contained in $\mathbb{H}%
^{2}\times \left\{ t\geq 0\right\} $, with $\partial \mathfrak{G}=\alpha
_{1}\cup \alpha _{2}$ and whose height is 
\begin{equation}
h_H(l):=\frac{2H}{\sqrt{1-4H^2}}\ln\left[\left(\frac{\sqrt{1-4H^2}+\sqrt{1-4H^2\tanh^2(l)}}{\sqrt{1-4H^2}+1}\right)\cosh(l)\right] \label{hG}
\end{equation}

\end{lemma}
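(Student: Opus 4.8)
The plan is to exploit the symmetry of the configuration and reduce the quasilinear equation $Q_{H}(u)=0$ to a first-order ODE. First I would introduce Fermi coordinates $(s,\tau)$ along the geodesic from which $\alpha_{1}$ and $\alpha_{2}$ are equidistant, so that $s$ is the signed distance to that geodesic and $\tau$ is arc length along it. In these coordinates the metric of $\hh$ is $ds^{2}+\cosh^{2}(s)\,d\tau^{2}$, the level curves $\{s=c\}$ are exactly the equidistant hypercycles, and the connected region bounded by $\alpha_{1}\cup\alpha_{2}$ is $\{|s|\le l\}$. I would then seek a solution of the form $u=u(s)$, which is natural because both the domain and its boundary are invariant under the translations $\tau\mapsto\tau+c$ and under the reflection $s\mapsto -s$.

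For such a $u$ one has $\grad u=u'(s)\,\partial_{s}$, and the divergence formula in this metric gives $\dive\bigl(f\,\partial_{s}\bigr)=f'+\tanh(s)\,f$. Writing $\phi(s):=u'/\sqrt{1+u'^{2}}$, the equation $Q_{H}(u)=0$ becomes the linear ODE
\[
(\cosh(s)\,\phi)'=-2H\cosh(s),
\]
whose solution is $\cosh(s)\,\phi=-2H\sinh(s)+C$. Requiring the graph to be symmetric about the central geodesic — equivalently $u'(0)=0$, i.e. a horizontal tangent plane over that geodesic — forces $C=0$ and hence
\[
\phi(s)=\frac{u'(s)}{\sqrt{1+u'(s)^{2}}}=-2H\tanh(s).
\]

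The hypothesis $H\in(0,1/2)$ enters precisely here: since $|\phi(s)|=2H|\tanh(s)|<2H<1$ for every $s\in\R$, the relation inverts to give the smooth bounded function $u'(s)=-2H\tanh(s)\big/\sqrt{1-4H^{2}\tanh^{2}(s)}$ on all of $\{|s|\le l\}$, so the resulting surface is a genuine graph. Normalizing with the boundary condition $u(\pm l)=0$, the height of $\mathfrak{G}$ is $u(0)=\int_{0}^{l}2H\tanh(s)\big/\sqrt{1-4H^{2}\tanh^{2}(s)}\,ds$. The substitution $v=\cosh(s)$ turns this into $\int_{1}^{\cosh l}2H\,dv\big/\sqrt{(1-4H^{2})v^{2}+4H^{2}}$, a standard integral of the type $\int dv/\sqrt{a^{2}v^{2}+b^{2}}$; evaluating it and using $(1-4H^{2})\cosh^{2}l+4H^{2}=\cosh^{2}l\,(1-4H^{2}\tanh^{2}l)$ yields exactly \eqref{hG}.

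The routine parts are the divergence computation and the final integration. The points requiring care — and the main obstacle — are the sign and orientation conventions: one must check that with the downward normal the constructed graph genuinely has mean curvature $+H$ (which fixes the sign in the ODE), that $u$ is decreasing in $|s|$ so that $\mathfrak{G}$ lies in $\{t\ge 0\}$ with its maximum over the geodesic and $\partial\mathfrak{G}=\alpha_{1}\cup\alpha_{2}$ at height $0$, and that the bound $H<1/2$ is what makes the construction work uniformly for \emph{every} $l>0$ rather than only for small $l$.
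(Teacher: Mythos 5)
Your proof is correct and takes essentially the same approach as the paper: both exploit the symmetry of the domain to reduce $Q_{H}(u)=0$ to an ODE for a function of the distance to the central geodesic, solve it, and read off the height as $\int_{0}^{l}2H\tanh s\big/\sqrt{1-4H^{2}\tanh^{2}s}\,ds$. The only differences are presentational: the paper writes down the solution $\psi$ in \eqref{psi2} and verifies the ODE, whereas you derive it by integrating the linear equation for $\phi=u'/\sqrt{1+u'^{2}}$ and, in addition, carry out the closed-form evaluation of the height integral that the paper leaves implicit.
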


\begin{proof}
Let $\gamma $ be the given geodesic. Set $d\left( z\right) =d_{\mathbb{H}%
^{2}}\left( z,\gamma \right) $, $z\in \mathbb{H}^{2}$. Let $\psi \in
C^{2}\left( [0,\infty \right) )$ be given by 
\begin{equation}
\psi \left( d\right) =2H\int_{d}^{l}\frac{\tanh s}{\sqrt{1-4H^{2}\tanh ^{2}s}%
}ds,  \label{psi2}
\end{equation}%
which is well-defined for $H\in \left( 0,1/2\right) $. Then observe that for
a function of the form $u=\tilde{u}\circ d$ the PDE in \eqref{DP} rewrites
as 
\begin{equation*}
\left(\frac{\tilde{u}^{\prime }\left( d\right) }{\sqrt{1+\left[ 
\tilde{u}^{\prime }\left( d\right) \right] ^{2}}}\right)^{\prime }+\left(\frac{\tilde{u}'(d) }{\sqrt{1+\left[ 
\tilde{u}^{\prime }(d) \right] ^{2}}}\right) \tanh d +2H=0.
\end{equation*}
Therefore $u(z)=\psi (d(z))$ satisfies \eqref{DP} in the domain
bounded by $\alpha _{1}\cup \alpha _{2}$ and it is non negative.
\end{proof}

\begin{definition}\label{def-util}
Given $H\in \left( 0,1/2\right) $, $r>\tanh ^{-1}\left( -2H\right) $ and $%
\rho >0$, we define%
\begin{eqnarray}
c_{H}\left( r,t\right) &=&\frac{\cosh r+2H\left( \sinh r-\sinh (r+t)\right) 
}{\cosh (r+t)}%
\text{, }t\geq 0,  \label{fHrt} \\
s_{H}\left( \rho ,t\right) &=&\frac{\sinh \rho +2H\left( \cosh \rho -\cosh
\left( \rho +t\right) \right) }{\sinh \left( \rho +t\right)}\text{, }t \geq 0,  \label{gHrt} \\
z_{H}(r) &=&\sinh ^{-1}\left( \sinh r+\frac{\cosh r}{2H}\right) -r,
\label{MHr} \\
Z_{H}(\rho ) &=&\cosh ^{-1}\left( \cosh \rho +\frac{\sinh \rho }{2H}\right)
-\rho  \label{NHr}\\
a_{H}(r)&=&\int_{0}^{z_{H}(r)}\frac{c_{H}\left( r,t\right)}{\sqrt{1-[c_H(r,t)]^2}} dt, \label{hHS}\\ 
A_{H}(\rho)&=&\int_{0}^{Z_{H}(\rho)}\frac{s_{H}\left( \rho,t\right)}{\sqrt{1-[s_H(\rho,t)]^2}} dt\label{hNod}
\end{eqnarray}%
and we denote by $\delta _{H}\left( r\right) $ and $\Delta _{H}\left( \rho
\right) $ the positive numbers given by 
\begin{equation}
\int_{0}^{\delta _{H}(r)}\frac{c_{H}\left( r,t\right)}{\sqrt{1-[c_H(r,t)]^2}} dt=0\text{ and }%
\int_{0}^{\Delta _{H}\left( \rho \right) }\frac{s_{H}\left( \rho,t\right)}{\sqrt{1-[s_H(\rho,t)]^2}} dt=0.
\label{dHr}
\end{equation}
\end{definition}

\begin{remark}
The functions $c_H(r, \cdot)$ and $s_H(\rho, \cdot)$ are well defined in $[0,\infty),$ and satisfy $|c_H(r, t)|<1,$ $|s_H(r, t)|<1$ for $t>0.$ Both are decreasing functions which assume the value zero at $z_H$ and $Z_H,$ respectively. Despite taking the value $1$ at $t=0$, the integrals in \eqref{dHr} are well defined. Besides, we may define
\begin{equation}
a_H\left(\tanh ^{-1}(-2H)\right):=\underset{r\rightarrow \tanh ^{-1}\left( -2H\right) }{\lim }
\int_{0}^{z_{H}(r)}\frac{c_{H}\left( r,t\right)}{\sqrt{1-[c_H(r,t)]^2}} dt=+\infty.  \label{r2H}
\end{equation}
\end{remark}

\begin{lemma}
\label{L2mhr}Let $H\in (0,1/2)$ and $r>\tanh ^{-1}\left( -2H\right) $ be
given. Let $\alpha $ be a hypercycle in $\mathbb{H}^{2}$ with geodesic
curvature $k_{\alpha }=-\tanh r$. There are a hypercycle $\beta $ in $%
\mathbb{H}^{2}$, equidistant to $\alpha $, which satisfies 
\begin{equation}\label{eq-deltaineq}
d\left( \alpha ,\beta \right) =\delta _{H}(r)\geq 2z_H\left( r\right)
\end{equation}
and an $H$-graph $\mathcal{G}=\mathcal{G}\left( r,H\right) $ over the
connected component of $\mathbb{H}^{2}$ bounded by $\alpha \cup \beta $,
contained in $\mathbb{H}^{2}\times \left\{ t\geq 0\right\} $ and such that $%
\partial \mathcal{G}=\alpha \cup \beta $. The height of $\mathcal{G}$ is $a_{H}(r).$
\end{lemma}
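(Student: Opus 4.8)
The plan is to build $\mathcal{G}$ as a graph whose level curves are exactly the hypercycles equidistant to $\alpha$. Let $\gamma$ be the geodesic to which $\alpha$ is equidistant and introduce Fermi coordinates $(s,\tau)$ around $\gamma$, so the metric is $ds^{2}+\cosh^{2}s\,d\tau^{2}$ and $\{s=\text{const}\}$ is a hypercycle of geodesic curvature $\tanh s$. I place $\alpha=\{s=r\}$, oriented so that $k_{\alpha}=-\tanh r$, and let $t=s-r\geq 0$ measure signed distance from $\alpha$ along the equidistant hypercycles on the far side of $\gamma$. Searching for $u=\psi(s)$ turns $Q_{H}(u)=0$ into precisely the ODE obtained in the proof of Lemma \ref{dgeo}, namely $\phi'+\phi\tanh s+2H=0$ with $\phi=\psi'/\sqrt{1+(\psi')^{2}}$; this is linear in $\phi$ with integrating factor $\cosh s$, so $\phi(s)=(C-2H\sinh s)/\cosh s$. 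I fix $C$ by demanding that $\mathcal{G}$ meet the slice $\hh\times\{0\}$ vertically along $\alpha$, i.e. $\phi=1$ at $s=r$, which gives $C=\cosh r+2H\sinh r$ and, writing $s=r+t$, exactly $\phi=c_{H}(r,t)$ as in \eqref{fHrt}.

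Next I would record the elementary properties of $c_{H}(r,\cdot)$: it equals $1$ at $t=0$, is strictly decreasing, vanishes at the value $t=z_{H}(r)$ characterised by $\sinh(r+t)=\sinh r+\cosh r/(2H)$ (matching \eqref{MHr}), and satisfies $c_{H}(r,t)\to -2H\in(-1,0)$ as $t\to\infty$; in particular $|c_{H}(r,t)|<1$ for $t>0$, so $\psi'(t)=c_{H}(r,t)/\sqrt{1-c_{H}(r,t)^{2}}$ is well defined. Integrating from height $0$ along $\alpha$ gives the height function $\psi(t)=\int_{0}^{t}c_{H}(r,\sigma)/\sqrt{1-c_{H}(r,\sigma)^{2}}\,d\sigma$, which rises to its maximum $a_{H}(r)$ at $t=z_{H}(r)$ and then strictly decreases; since $\psi'(t)\to -2H/\sqrt{1-4H^{2}}<0$, there is a unique finite $\delta_{H}(r)>z_{H}(r)$ with $\psi(\delta_{H}(r))=0$, which is exactly \eqref{dHr}. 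Taking $\beta=\{t=\delta_{H}(r)\}$, the graph $\mathcal{G}$ of $u$ then lies in $\{t\geq 0\}$, has $\partial\mathcal{G}=\alpha\cup\beta$ and maximal height $a_{H}(r)$, proving all claims except the estimate \eqref{eq-deltaineq}.

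The one genuinely nontrivial point is $\delta_{H}(r)\geq 2z_{H}(r)$. Since $\psi$ increases on $[0,z_{H}]$ and decreases on $[z_{H},\infty)$, it suffices to prove $\psi(2z_{H}(r))\geq 0$, for then the decreasing branch forces $\delta_{H}\geq 2z_{H}$. Folding the integral about $t=z_{H}$ gives $\psi(2z_{H})=\int_{0}^{z_{H}}[\psi'(z_{H}-x)+\psi'(z_{H}+x)]\,dx$, and because $y\mapsto y/\sqrt{1-y^{2}}$ is odd and increasing the integrand is nonnegative exactly when $c_{H}(r,z_{H}-x)+c_{H}(r,z_{H}+x)\geq 0$. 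Writing $s_{0}=r+z_{H}(r)$, so that $\phi(s_{0})=0$ and $\phi(s)=2H(\sinh s_{0}-\sinh s)/\cosh s$, the identities $\sinh s_{0}-\sinh(s_{0}\mp x)=\pm 2\cosh(s_{0}\mp x/2)\sinh(x/2)$ reduce this to the inequality $4H\sinh^{2}(x/2)\,[\tanh(s_{0}-x)+\tanh(s_{0}+x)]\geq 0$.

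Everything thus comes down to $\tanh(s_{0}-x)+\tanh(s_{0}+x)\geq 0$ for $x\in[0,z_{H}]$, and this is the step I expect to be the crux. I would first note $s_{0}>0$: indeed $\sinh s_{0}=\sinh r+\cosh r/(2H)>0$ because $\tanh r>-2H>-1/(2H)$ (here $1/(2H)>2H$ uses $H<1/2$). The map $x\mapsto\tanh(s_{0}-x)+\tanh(s_{0}+x)$ has derivative $\mathrm{sech}^{2}(s_{0}+x)-\mathrm{sech}^{2}(s_{0}-x)\leq 0$ on $[0,\infty)$, since $s_{0}+x\geq|s_{0}-x|$; hence its minimum on $[0,z_{H}]$ is at $x=z_{H}$, where it equals $\tanh r+\tanh(r+2z_{H})$. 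This last quantity is $\geq 0$ precisely because $\tanh$ is increasing and odd and $r+2z_{H}\geq -r$, i.e. $s_{0}\geq 0$. With \eqref{eq-deltaineq} established, the three assertions of the lemma follow from the construction above.
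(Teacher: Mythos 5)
Your proof is correct and follows essentially the same route as the paper: you construct $\mathcal{G}$ as an ODE solution in the distance coordinate with vertical contact along $\alpha$ (so that $\psi'/\sqrt{1+(\psi')^{2}}=c_{H}(r,\cdot)$), and you prove $\delta_{H}(r)\geq 2z_{H}(r)$ by folding the integral about $z_{H}(r)$ and reducing, via monotonicity of $y\mapsto y/\sqrt{1-y^{2}}$, to the same inequality $c_{H}(r,z_{H}-x)+c_{H}(r,z_{H}+x)\geq 0$ that appears as \eqref{d1_2MHr} in the paper. The only difference is that you carry out explicitly the ``straightforward computation'' the paper leaves to the reader, using sum-to-product identities to rewrite the left-hand side as $4H\sinh^{2}(x/2)\left[\tanh(s_{0}-x)+\tanh(s_{0}+x)\right]$ with $s_{0}=r+z_{H}(r)>0$, which is a correct and clean completion of that step.
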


\begin{proof}
Once again we use the distance function to turn the Dirichlet problem %
\eqref{DP} into an ODE problem.

Let $A$ be the connected component of $\mathbb{H}^{2}\backslash \alpha $
such that $k_{\partial A}=-\tanh r$ with the inner orientation. Set $d\left(
z\right) =d_{\mathbb{H}^{2}}\left( z,\alpha \right) $, $z\in A$. For $\gamma $
being the geodesic equidistant to $\alpha $, if the distance between $\gamma$
and $\alpha $ is $|r|$ and $\gamma \subset A$, observe that $r<0$. Otherwise,
if $\gamma \subset \mathbb{H}^{2}\backslash A$, then $r>0$.

Take $\tilde{u}\in C^{2}\left( (0,\infty \right) )\cap C^{0}\left( [0,\infty \right)
)$ satisfying $\tilde{u}\left( 0\right) =0$, $\tilde{u}^{\prime }\left( d\right) \rightarrow
+\infty $ when $d\rightarrow 0$ (see \cite{KST} for details), and such that
the graph of $u=\tilde{u}\circ d\in C^{2}\left( A\right) \cap C^{0}\left( \overline{A%
}\right) $ is an $H$-graph. Then 
\begin{equation}
\tilde{u}(d)=\int_{0}^{d}\frac{c_{H}\left( r,t\right)}{\sqrt{1-[c_H(r,t)]^2}} dt,  \label{bar2}
\end{equation}%
and clearly $u|_{\alpha }=0$. It is immediate to see that $z_H\left(
r\right) $ is the maximum point of the function $\tilde{u}$ and $\tilde{u}$ goes to $-\infty 
$ when $d\rightarrow +\infty $. Set 
\begin{equation*}
\beta :=\left(G\left(u\right) \backslash \alpha \right)\cap \left(\mathbb{H}%
^{2}\times \left\{ 0\right\} \right).
\end{equation*}%
Notice that $\beta \subset A$ is a hypercycle which satisfies $d\left(
\alpha ,\beta \right) =\delta _{H}\left( r\right) $, where $\delta
_{H}\left( r\right) $ is given by \eqref{dHr}.

Now, we prove that $\tilde{u}$ is non-negative in $\left[ 0,2z_H\left( r\right) %
\right] $, that is $2z_H\left( r\right) \leq \delta _{H}\left( r\right) $.
Notice that is enough to show that 
\begin{equation*}
\left\vert \tilde{u}^{\prime }\left( z_H\left( r\right) +s\right) \right\vert \leq
\left\vert \tilde{u}^{\prime }\left( z_H\left( r\right) -s\right) \right\vert ,%
\text{ }s\in \left( 0,z_H\left( r\right) \right) ,
\end{equation*}%
which is equivalent to $-\tilde{u}^{\prime }\left( z_H\left( r\right) +s\right)
\leq \tilde{u}^{\prime }\left( z_H\left( r\right) -s\right) .$

Since 
\begin{equation*}
\tilde{u}^{\prime }\left( s\right) =\frac{c_H \left( s\right) }{\sqrt{1-\left[ c_H
\left( s\right) \right] ^{2}}}.
\end{equation*}%
and $x\mapsto x\left( 1-x^{2}\right) ^{-1/2}$ is increasing
in the interval $(0,1)$, it is enough to show that 
\begin{equation}
-c_H \left( z_H\left( r\right) +s\right) \leq c_H \left( z_H\left(
r\right) -s\right) .  \label{d1_2MHr}
\end{equation}%
We have $0=\tilde{u}^{\prime }\left( z_H\left( r\right) \right) =c_H \left(
z_H\left( r\right) \right) $ and, then, we can rewrite $c_H \left(
t\right) $ as 
\begin{equation}\label{eq-conta}
c_H \left( t\right) =\frac{2H\left[ \sinh \left( r+z_H\left( r\right)
\right) -\sinh \left( r+t\right) \right] }{\cosh \left( r+t\right) }.
\end{equation}%
Plugging \eqref{eq-conta} in \eqref{d1_2MHr}, expanding $\cosh $ and $\sinh $ of sums and observing that%
\begin{equation*}
\sinh \left( r+z_H\left( r\right) \right) =\sinh r+\frac{\cosh r}{2H},
\end{equation*}%
a straightforward computation gives us that \eqref{d1_2MHr} holds and the
result follows.
\end{proof}

As we will see in the next section, the graphs given by the lemmas above
will provide our barriers relatively to the Dirichlet problem \eqref{DP} in
the case $-1<\inf_{\partial \Omega }k_{\partial \Omega }\leq 2H$. Relatively to the case $\inf_{\partial \Omega
}k_{\partial \Omega }\leq -1$, we also have barriers. They are pieces of $H$%
-nodoids, described in the following lemma:

\begin{lemma}
\label{L3nhr}Let $H\in \left( 0,1/2\right) $ and $\rho >0$ be given. Let $%
C_{\rho }\subset \mathbb{H}^{2}$ be a (hyperbolic) circle of radius $\rho $.
There is a compact $H$-graph $\mathbb{G=G}\left( \rho ,H\right) $ contained
in $\mathbb{H}^{2}\times \left\{ t\geq 0\right\} $, over an annulus in $%
\mathbb{H}^{2}$ whose boundary is the union of the concentric circles $%
C_{\rho }$ and $C_{\rho ^{\ast }}$, with 
\begin{equation}\label{eq-Deltaineq}
\rho ^{\ast }:=\rho +\Delta _{H}\left( \rho \right) \geq \rho +2Z_H\left(
\rho \right).
\end{equation}
Moreover, the height of $%
\mathbb{G}$ is $A_H(\rho).$ The quantities
$\Delta _{H}\left( \rho \right),$ $Z_H\left(
\rho \right)$ and $A_H(\rho)$ were presented in Definition \ref{def-util}.
\end{lemma}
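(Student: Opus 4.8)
The plan is to mimic the proof of Lemma \ref{L2mhr} exactly, replacing the hypercycle geometry (distance-to-a-geodesic setup, $\cosh/\sinh$ playing their respective roles) by the radial geometry of concentric circles. Let $C_\rho$ be the given circle and let $A$ denote the exterior (or the appropriate connected component) of $\mathbb{H}^2\setminus C_\rho$, oriented so that its boundary has geodesic curvature $\coth\rho$. Set $d(z)=d_{\mathbb{H}^2}(z,C_\rho)$ measured radially outward, so that level sets of $d$ are the concentric circles $C_{\rho+d}$ of radius $\rho+d$, whose geodesic curvature is $\coth(\rho+d)$. For a radial function $u=\tilde u\circ d$ the mean curvature equation \eqref{DP} reduces to the ODE
\begin{equation*}
\left(\frac{\tilde u'(d)}{\sqrt{1+[\tilde u'(d)]^2}}\right)' + \left(\frac{\tilde u'(d)}{\sqrt{1+[\tilde u'(d)]^2}}\right)\coth(\rho+d) + 2H = 0,
\end{equation*}
the only change from Lemma \ref{L2mhr} being that the curvature factor $\tanh$ of the geodesic foliation is replaced by $\coth(\rho+d)$ coming from the circular foliation. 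Integrating the first-order linear ODE for the quantity $\tilde u'/\sqrt{1+(\tilde u')^2}$ with the boundary condition $\tilde u'(d)\to+\infty$ as $d\to 0$ (so the graph is vertical over $C_\rho$, as in \cite{KST}) yields precisely $\tilde u'(d)/\sqrt{1+[\tilde u'(d)]^2}=s_H(\rho,d)$, with $s_H$ as in \eqref{gHrt}; hence $\tilde u(d)=\int_0^d s_H(\rho,t)/\sqrt{1-[s_H(\rho,t)]^2}\,dt$, which is \eqref{hNod}.

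Next I would read off the qualitative behavior of $\tilde u$ from the remark following Definition \ref{def-util}: $s_H(\rho,\cdot)$ is decreasing, equals $1$ at $t=0$ and vanishes at $t=Z_H(\rho)$, so $\tilde u$ increases up to its maximum at $d=Z_H(\rho)$ (where the integrand changes sign) attaining the value $A_H(\rho)$, then decreases. Defining $\rho^*:=\rho+\Delta_H(\rho)$ by the condition $\tilde u(\Delta_H(\rho))=0$ from \eqref{dHr}, the inner circle $C_\rho$ and the outer circle $C_{\rho^*}$ are exactly where the graph meets the slice $\mathbb{H}^2\times\{0\}$, so $\mathcal{G}$ is an $H$-graph over the annulus between them with $\partial\mathbb{G}=C_\rho\cup C_{\rho^*}$, lying in $\{t\ge 0\}$ with height $A_H(\rho)$. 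This establishes every claim except the inequality in \eqref{eq-Deltaineq}.

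The main obstacle, as in the previous lemma, is proving the inequality $\Delta_H(\rho)\ge 2Z_H(\rho)$, i.e.\ that $\tilde u\ge 0$ on $[0,2Z_H(\rho)]$. The strategy is identical: since $x\mapsto x(1-x^2)^{-1/2}$ is increasing on $(0,1)$, it suffices to show the symmetry-type estimate
\begin{equation*}
-s_H(\rho,Z_H(\rho)+t)\le s_H(\rho,Z_H(\rho)-t),\qquad t\in(0,Z_H(\rho)),
\end{equation*}
which guarantees the descending branch loses height no faster than the ascending branch gains it. Using $s_H(\rho,Z_H(\rho))=0$ to rewrite the numerator of $s_H$ (as was done in \eqref{eq-conta}, here via $\cosh(\rho+Z_H(\rho))=\cosh\rho+\sinh\rho/2H$ from \eqref{NHr}), expanding the hyperbolic sums, and simplifying should reduce the desired inequality to an elementary one-variable comparison. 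I expect the computation here to be slightly more delicate than in Lemma \ref{L2mhr}, because the relevant denominator $\sinh(\rho+t)$ grows on both sides of $Z_H(\rho)$ rather than the more symmetric $\cosh$; nonetheless the same type of straightforward hyperbolic-identity manipulation should close the argument, and the result follows.
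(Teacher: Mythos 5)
Your proposal is correct and takes essentially the same route as the paper, which likewise defines $\tilde u$ by the radial ODE obtained from $\Delta d=\coth(\rho+d)$, identifies $\tilde u'/\sqrt{1+(\tilde u')^2}=s_H(\rho,\cdot)$, and establishes nonnegativity on $[0,2Z_H(\rho)]$ by repeating the symmetry comparison of Lemma \ref{L2mhr}. Your concern about the $\sinh$ denominator is unfounded: writing $M=\rho+Z_H(\rho)>0$, the identities $\cosh(M+t)-\cosh M=2\sinh(M+t/2)\sinh(t/2)$ and $\sinh A\sinh B=\tfrac12[\cosh(A+B)-\cosh(A-B)]$ reduce the desired comparison to $\cosh(2M-t/2)\leq\cosh(2M+t/2)$, which holds since $M>0$, so the argument closes exactly as in the hypercycle case.
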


\begin{proof}
Let $B_{\rho }\subset \mathbb{H}^{2}$ be the open disk of radius $\rho $
such that $\partial B_{\rho }=C_{\rho }$ and set $A=\mathbb{H}^{2}\backslash 
\overline{B}_{\rho }$ and $d\left( z\right) =d_{\mathbb{H}^{2}}\left(
z,C_{\rho }\right) $, $z\in A$. The function $u =\tilde{u} \circ d\in
C^{2}\left( A\right) \cap C^{0}\left( \overline{A}\right) $ given by%
\begin{equation}
\tilde{u} \left( d\right) =\int_{0}^{d}\frac{s_{H}\left( \rho,t\right)}{\sqrt{1-[s_H(\rho,t)]^2}} dt  \label{bar3}
\end{equation}%
satisfies $Q_{H}\left( u \right) =0$ in $A$, with $u |_{C_{\rho
}}=0$. Moreover, $\tilde{u} $ has its maximum point at $Z_H\left( \rho \right) $
and is non negative in $\left[ 0,2Z_H\left( \rho \right) \right] $. The
proof of these facts follows the same steps of the lemma above, just noting
that, now, $\Delta d=\coth \left( \rho +d\right) $ and the conditions on the
function $\tilde{u} $ are $\tilde{u} \left( 0\right) =0$ and $\tilde{u} ^{\prime }\left(
d\right) \rightarrow +\infty $ when $d\rightarrow 0$ (see \cite{KST} for
details).
\end{proof}

The next result presents a relation between the barriers constructed in the
previous lemmas.

\begin{proposition}
\label{P_conjs} Let $H\in \left( 0,1/2\right) $ be given. The function $a_H$ is decreasing, the function $A_H$ is increasing and both have the same limit at infinity, which is 
\begin{equation}
a_H(\infty) =\frac{\pi }{2}-\frac{4H}{\sqrt{1-4H^{2}}}\tanh ^{-1}\left( \frac{%
1-2H}{\sqrt{1-4H^{2}}}\right). \label{hNinf}
\end{equation}
Besides, if $\tanh ^{-1}\left( -2H\right) \leq r<0$, then $h_{H}(\left\vert r\right\vert)<a_H(r) $ and $%
\left\vert r\right\vert <z_H\left( r\right) $, where $h_{H}$ is given by \eqref{hG} and the other quantities were presented in Definition \ref{def-util}.
\end{proposition}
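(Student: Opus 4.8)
The plan is to reduce both $a_H$ and $A_H$ to explicit one-variable integrals via a ``first integral'' of the profile, and to read off all four assertions from that normal form.

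First, in the integral \eqref{hHS} defining $a_H(r)$ I substitute $w=r+t$ and $c=c_H(r,t)$. Multiplying \eqref{fHrt} by $\cosh(r+t)$ gives the conserved relation
$$c\cosh w + 2H\sinh w = \cosh r + 2H\sinh r =: K,$$
which, together with $\cosh^2 w-\sinh^2 w=1$, expresses $\cosh w,\sinh w$ rationally in $c$ and $P:=\sqrt{K^2+4H^2-c^2}$. Differentiating the relation gives $dt=dw=-X\,dc/P$ with $X=\cosh w$, and since $c$ runs from $1$ (at $t=0$) down to $0$ (at $t=z_H(r)$, where $c=0$ by \eqref{MHr}),
$$a_H(r)=\int_0^1\frac{c\,X}{P\sqrt{1-c^2}}\,dc=:I(K),\qquad X=\frac{cK-2HP}{c^2-4H^2}.$$
The same computation on \eqref{hNod}, using \eqref{gHrt} and the invariant $s\sinh w+2H\cosh w=\sinh\rho+2H\cosh\rho=:L$, yields $A_H(\rho)=J(L)$ of identical shape with $Q:=\sqrt{L^2+s^2-4H^2}$ replacing $P$. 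A preliminary check is that the denominator factor $c^2-4H^2$ creates no singularity: at $c=2H$ one has $P=K$ and the numerator $c(cK-2HP)$ vanishes, so the integrand is continuous on $[0,1)$ with only the integrable endpoint singularity at $c=1$ (and $P,Q$ stay bounded below for $r>\tanh^{-1}(-2H)$, $\rho>0$).

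Since $dK/dr=\sinh r+2H\cosh r\ge 0$ and $dL/d\rho=\cosh\rho+2H\sinh\rho>0$, the monotonicity of $a_H$ and $A_H$ reduces to that of $I$ and $J$. Differentiating under the integral sign, using $\partial P/\partial K=K/P$, I get $\frac{d}{dK}\frac{K}{P}=(4H^2-c^2)/P^3$, which cancels the factor $c^2-4H^2$ and leaves $I'(K)=-\int_0^1 c^2(P^3\sqrt{1-c^2})^{-1}\,dc<0$; the parallel computation gives $J'(L)=\int_0^1 s^2(Q^3\sqrt{1-s^2})^{-1}\,ds>0$. Hence $a_H$ is decreasing and $A_H$ is increasing. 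For the common limit I send $K\to\infty$ (resp. $L\to\infty$): since $P=K\sqrt{1+(4H^2-c^2)/K^2}$, the integrand $cX/P$ tends pointwise to $c/(c+2H)$, and likewise for $J$, so by dominated convergence (the integrands are bounded by $C/\sqrt{1-c^2}$)
$$a_H(\infty)=A_H(\infty)=\int_0^1\frac{c}{(c+2H)\sqrt{1-c^2}}\,dc.$$
Writing $c/(c+2H)=1-2H/(c+2H)$ and evaluating by $c=\cos\theta$ (the first piece gives $\pi/2$, the second a standard $\tanh^{-1}$) reproduces \eqref{hNinf}; combined with the monotonicity this is exactly the equality of limits at infinity.

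Finally, for $\tanh^{-1}(-2H)\le r<0$, the inequality $|r|<z_H(r)$ from \eqref{MHr} is equivalent to $\sinh r+(\cosh r)/2H>0$, i.e. $K>0$, which always holds since $K>(1-2H)\cosh r>0$. For $h_H(|r|)<a_H(r)$ I use that the hypercycle $\alpha$ of Lemma \ref{L2mhr} (at distance $|r|$ from the geodesic $\gamma\subset A$, with $k_\alpha=-\tanh r=\tanh|r|$) is also one of the two symmetric boundary hypercycles of the barrier $\mathfrak{G}(|r|,H)$ of Lemma \ref{dgeo}. Parametrizing both graphs by the distance $\eta$ from $\alpha$, their angle functions $\tilde u'/\sqrt{1+(\tilde u')^2}$ are $c_H(r,\eta)$ for $\mathcal{G}$ and, from \eqref{psi2}, $-2H\tanh(r+\eta)$ for $\mathfrak{G}$; the inequality $c_H(r,\eta)>-2H\tanh(r+\eta)$ once again reduces, after clearing $\cosh(r+\eta)>0$, to $K>0$. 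As $x\mapsto x/\sqrt{1-x^2}$ is increasing and both profiles vanish on $\alpha$, integrating shows the $\mathcal{G}$-profile strictly dominates the $\mathfrak{G}$-profile on $(0,|r|]$; evaluating at $\eta=|r|$ (the peak of $\mathfrak{G}$) and then invoking $z_H(r)>|r|$ with the monotonicity of the $\mathcal{G}$-profile up to its maximum at $z_H(r)$ gives $a_H(r)>h_H(|r|)$. I expect the main obstacle to be the bookkeeping of the change of variables and the derivative-under-the-integral step — confirming that the spurious factor $c^2-4H^2$ cancels everywhere and justifying the limit interchange — whereas the two geometric inequalities both collapse to the single fact $K>0$.
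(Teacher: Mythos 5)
Your proof is correct, but it follows a genuinely different route from the paper's. For the monotonicity and the common limit, the paper works directly with the integrals \eqref{hHS} and \eqref{hNod}: it uses the signs of $\partial c_H/\partial r$ and $\partial s_H/\partial \rho$ together with the monotonicity of the integration limits $z_H$, $Z_H$, and then obtains the limit by passing to the limiting integrand \eqref{gfcomp} over the limiting interval $[0,\log(1/2H+1)]$; you instead use the first integral $c\cosh w+2H\sinh w=K$ (resp. $s\sinh w+2H\cosh w=L$) to change variables to the angle variable on the fixed interval $[0,1]$, which reduces both heights to the normal forms $I(K)$, $J(L)$, and then get monotonicity from the explicit signs of $I'(K)$, $J'(L)$ (the sign flip coming precisely from $P^2-K^2=4H^2-c^2$ versus $Q^2-L^2=s^2-4H^2$) and the common limit $\int_0^1 c\,\bigl[(c+2H)\sqrt{1-c^2}\,\bigr]^{-1}dc$ from dominated convergence -- I checked the cancellation of the spurious factor $c^2-4H^2$, the branch choice for $X$, and the identity $dw=-X\,dc/P$, and they all hold. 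For the inequality $h_H(|r|)<a_H(r)$, the paper invokes the geometric tangency principle, comparing the graph $\mathcal{G}$ of Lemma \ref{L2mhr} (vertical along $\alpha_1$) with the graph $\mathfrak{G}$ of Lemma \ref{dgeo} (not vertical there); your pointwise comparison of the angle functions $c_H(r,\eta)$ and $-2H\tanh(r+\eta)$, which reduces to $K>0$ after clearing $\cosh(r+\eta)$, followed by integration of the profiles and the use of $|r|<z_H(r)$, is an elementary ODE-level substitute that avoids the maximum principle altogether. What each approach buys: the paper's argument is shorter given the lemmas already established and stays close to the geometry; yours makes the monotonicity and the equality of the two limits completely transparent (both become one explicit integral over $[0,1]$ with quantitative derivative formulas), and it localizes the source of strictness in the height comparison in the single inequality $K=\cosh r+2H\sinh r>0$.
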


\begin{proof}
First notice that straightforward
computations give us that $Z_H$ and $z_H$ are increasing
and decreasing functions, respectively, with the same limit at infinity 
\begin{equation}
z_H(\infty) =\log
\left( \frac{1}{2H}+1\right) \label{NMcomp}
\end{equation}%
Also, we have $\frac{\partial s_{H}}{\partial\rho}\left( \rho ,t\right)>0, $ $\frac{\partial c_{H}}{\partial r}\left( r,t\right)<0 $ and 
\begin{equation}
\underset{\rho \rightarrow +\infty }{\lim }\frac{s_{H}\left( \rho ,t\right)}{\sqrt{1-[s_{H}\left( \rho ,t\right)]^2}} =\frac{%
-2H+(1+2H)e^{-t}}{\sqrt{1-(-2H+(1+2H)e^{-t})^2}}  \label{gfcomp}
\end{equation}
which coincides with the limit as $r$ goes to infinity of $$\frac{c_{H}\left( r ,t\right)}{\sqrt{1-[c_{H}\left( r,t\right)]^2}} .$$
Then, both functions, $a_H$ and $A_H$ converge to
\begin{equation*}
\displaystyle{\int_0^{\log \left( \frac{1}{2H}+1\right) }\frac{%
-2H+(1+2H)e^{-t}}{\sqrt{1-(-2H+(1+2H)e^{-t})^2}}dt}
\end{equation*}
at infinity. This integral results the expression presented in the statement.  Moreover, for all $\rho >0$ and $r> \tanh ^{-1}\left( -2H\right) $, we have 
that $A_H(\rho) \leq a_H\left(
r\right).$

For the comparison with $h_{H}$, take two hypercycles $\alpha
_{1} $ and $\alpha _{2}$ equidistant $\left\vert r\right\vert $ to a same
geodesic $\gamma $. Let $A$ be the connected component of $\mathbb{H}%
^{2}\backslash \alpha _{1}$ which contains $\gamma $. Then the distance, with
sign, from $\alpha _{1}$ to $\gamma $ is $r<0$. From Lemma \ref{L2mhr}, there
is a cmc $H-$graph $\mathcal{G}\subset \mathbb{H}^{2}\times \left\{ t\geq
0\right\} $ with boundary $\alpha _{1}\cup \beta $ in $\mathbb{H}^{2}$,
where $\beta $ is a hypercycle which satisfies $d\left( \alpha _{1},\beta
\right) \geq 2z_H\left( r\right) $ and, moreover, $\mathcal{G}$ is
vertical at $\alpha _{1}$. On the other hand, by Lemma \ref{dgeo}, there is
a cmc $H$-graph $\mathfrak{G\subset }\mathbb{H}^{2}\times \left\{ t\geq
0\right\} $ with boundary $\alpha _{1}\cup \alpha _{2}$ which is not
vertical at $\alpha _{1}$ (neither at $\alpha _{2}$). Now the tangent
principle implies that $h_{H}(\left\vert r\right\vert)<a_H(r) .$

The inequality $|r|<z_H(r)$ follows from the definition of $z_H(r).$
\end{proof}

\section{Main results}

Let us prove Theorem \ref{MT} which shows that an a priori height estimate
is enough for the existence result in \eqref{DP}. After that we obtain the
existence results.

\begin{definition}
We say that a $C^{2}$ domain $\Omega \subset \mathbb{H}^{2}$ satisfies
the exterior circle condition of radius $\rho >0$ if, for each $p\in
\partial \Omega $, there is hyperbolic circle $C_{\rho }\subset \mathbb{H}%
^{2}\backslash \overline{\Omega }$ of radius $\rho $ which is tangent to $%
\partial \Omega $ at $p$.
\end{definition}

We observe that if $-1\leq \inf_{\partial \Omega }k_{\partial \Omega }$,
then $\Omega $ satisfies the exterior circle condition of radius $\rho $ for
all $\rho >0$.

\begin{theorem}
\label{MT}Let $\Omega \subset \mathbb{H}^{2}$ be a bounded $C^{2,\alpha }$
domain and let $H\in (0,1/2)$ be given. Set $\kappa =\inf_{\partial \Omega
}k_{\partial \Omega }.$ 
\begin{enumerate}[label=(\roman{enumi})]
\item\label{teo32-i} If $\kappa >2H$ then the Dirichlet problem \eqref{DP} has a solution in $%
C^{2,\alpha }(\overline{\Omega }).$
\item\label{teo32-ii} If $\kappa \in (-1,2H]$ and there exists $0<a<a_H\left(
\tanh ^{-1}\left( -\kappa \right)\right) $ such that any solution $u$ of (%
\ref{DP}) satisfies the a priori height estimate $\sup_{\Omega }|u|\leq a$,
where $a_H$ is given by \eqref{hHS}, then there is a solution $%
u\in C^{2,\alpha }(\overline{\Omega })$ to the Dirichlet problem \eqref{DP}. 
\item\label{teo32-iii} If $\kappa <-1$, $\Omega $ satisfies the exterior circle condition of
radius $\coth ^{-1}\left( -\kappa \right) $ and there exists $0<b<A_H\left( \coth ^{-1}\left( -\kappa \right) \right) $ such that any
solution $u$ to \eqref{DP} satisfies the a priori height estimate $%
\sup_{\Omega }|u|\leq b$, where $A_H$ is given by \eqref{hNod},
then the Dirichlet problem \eqref{DP} has a solution $u\in C^{2,\alpha }(%
\overline{\Omega }).$
\item\label{teo32-iv} If $\kappa =-1$ and there exists $0<b<a_H(\infty)$ such that any solution $u$ to \eqref{DP} satisfies the a priori
height estimate $\sup_{\Omega }|u|\leq b$, where $a_H(\infty) $ is given by \eqref{hNinf}, then the Dirichlet problem \eqref{DP} has a solution $u\in C^{2,\alpha }(\overline{\Omega })$.
\end{enumerate}
\end{theorem}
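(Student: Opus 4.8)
The plan is to prove all four cases by the classical continuity method, establishing existence through a priori estimates (height, boundary gradient, interior gradient) that allow us to apply the Leray--Schauder degree theory or a standard continuity argument. The essential mechanism is to construct appropriate barriers from the surfaces built in Lemmas \ref{dgeo}, \ref{L2mhr} and \ref{L3nhr}, and then invoke the maximum principle to control solutions. I would organize the argument around the family of problems $Q_H(u) + \tau \cdot(\text{lower order}) = 0$, or more directly, deform $H$ from a value where existence is known (e.g.\ using case \ref{teo32-i} as the anchor) down to the desired one, keeping the domain fixed.

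First I would treat case \ref{teo32-i} ($\kappa > 2H$) as the base case. Here the classical approach works: since $k_{\partial\Omega} > 2H \geq H$, one can use a vertical translate of the hypercycle graphs $\mathfrak{G}(l,H)$ (or directly a geodesic-distance barrier) touching $\partial\Omega$ from outside to bound the boundary gradient, and a height barrier to bound $\sup|u|$. Standard elliptic theory (Schauder estimates plus the continuity method, as in Theorem 1.5 of \cite{Sp}) then yields a $C^{2,\alpha}(\overline{\Omega})$ solution. For the remaining cases the height estimate is \emph{hypothesized}, so the real work is converting that hypothesis into the missing ingredient, namely a boundary gradient estimate, and then closing the continuity argument.

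The heart of the proof, and the main obstacle, is the boundary gradient estimate in cases \ref{teo32-ii}, \ref{teo32-iii} and \ref{teo32-iv}, where $\kappa \leq 2H$ so no graph of the form $\mathfrak{G}$ can be made vertical at $\partial\Omega$. Here I would exploit the barriers with \emph{infinite} boundary slope. For case \ref{teo32-ii}, at each $p \in \partial\Omega$ with $k_{\partial\Omega}(p) = -\tanh r_p \geq \kappa = -\tanh(\tanh^{-1}(-\kappa))$, the hypercycle graph $\mathcal{G}(r_p, H)$ from Lemma \ref{L2mhr} is vertical at the inner-tangent hypercycle $\alpha$ and has height $a_H(r_p) \geq a_H(\tanh^{-1}(-\kappa)) > a$ (using that $a_H$ is decreasing, from Proposition \ref{P_conjs}). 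Since the a priori height $a$ lies strictly below the barrier's height, a suitable vertical translate of $\mathcal{G}$ placed over the tangent hypercycle fits above the solution graph near $p$ while being vertical at the boundary; comparison then forces the solution's gradient at $p$ to be finite and uniformly bounded. The quantitative gap $a < a_H(\tanh^{-1}(-\kappa))$ is precisely what makes the barrier strictly steeper than any admissible solution, which is why the strict inequality in the hypothesis is indispensable. Case \ref{teo32-iii} is analogous but uses the nodoid pieces $\mathbb{G}(\rho, H)$ of Lemma \ref{L3nhr} with $\rho = \coth^{-1}(-\kappa)$, whose existence as a tangent barrier from outside is guaranteed by the exterior circle condition; here the relevant height is $A_H(\coth^{-1}(-\kappa)) > b$, using that $A_H$ is increasing. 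Case \ref{teo32-iv} ($\kappa = -1$) is the limiting case: since $a_H(\infty) = A_H(\infty)$ by Proposition \ref{P_conjs}, one may approximate using either family as the curvature threshold tends to its limit, and the condition $b < a_H(\infty)$ again guarantees a uniform strict gap; I would obtain the estimate either by passing to the limit in the barriers of case \ref{teo32-ii} (taking $r \to \infty$) or by a direct construction and then argue by continuity.

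Having secured uniform height and boundary gradient estimates, the interior gradient estimate follows from the classical results for the cmc (mean curvature type) equation in $\mathbb{H}^2\times\mathbb{R}$, which upgrade a boundary gradient bound to a global one for the prescribed mean curvature operator. With $C^1$ bounds in hand, the equation becomes uniformly elliptic on the solution set, Schauder theory gives $C^{2,\alpha}$ bounds, and the Leray--Schauder fixed point theorem (or the continuity method deforming $H$) produces a solution. I expect the delicate point to be verifying that the translated barrier genuinely sits on the correct side of the solution graph throughout a full neighborhood of each boundary point — this requires the geometric fact that the tangent hypercycle (resp.\ circle) realizing the curvature bound $\kappa$ lies locally outside $\Omega$, together with the maximum principle applied to the difference of the two $H$-graphs. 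Because the barriers are vertical at their inner boundary, the comparison must be set up carefully so that the infinite slope of the barrier dominates the finite-but-unknown slope of the solution; the strict height gap is exactly the slack that permits a small vertical translation making the comparison strict.
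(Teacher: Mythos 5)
Your overall architecture (hypothesized height estimate, boundary gradient estimate from barriers, then standard elliptic theory) is the same as the paper's, and case (i) is dispatched identically via Theorem 1.5 of \cite{Sp}. However, there is a genuine gap at the heart of your argument: the mechanism you describe for the boundary gradient estimate does not produce one. You place the barrier of Lemma \ref{L2mhr} (resp.\ Lemma \ref{L3nhr}) so that it is \emph{vertical} at the contact point $p$ and argue that ``the infinite slope of the barrier dominates the finite-but-unknown slope of the solution.'' Comparison against a barrier with infinite slope at the contact point is vacuous: from $u\leq w_p$ and $u(p)=w_p(p)=0$ one only gets $\partial_\nu u(p)\leq \partial_\nu w_p(p)=+\infty$. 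Quantitatively, a vertical barrier only yields $u(z)\leq \tilde{u}(d(z))\sim C\sqrt{d(z)}$ near $p$, i.e.\ a H\"older-$1/2$ modulus of continuity, not a Lipschitz bound; without a uniform boundary gradient estimate the continuity method/Leray--Schauder argument does not close, and $u\in C^{2,\alpha}(\overline{\Omega})$ cannot be concluded. Moreover, a curve of curvature exactly $\kappa$ tangent to $\partial\Omega$ at $p$ need not lie in $\Omega^{C}$ when only $k_{\partial\Omega}\geq\kappa$ holds, so even the placement of your untranslated barrier is problematic.

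What the paper actually does --- and what is missing from your proposal --- is to \emph{destroy} the verticality before comparing. For case (ii), with $r=\tanh^{-1}(-\kappa)$, one chooses $r_1>0$ with $\tilde{u}(r_1)+a<a_H(r)$ (possible precisely because of the strict gap $a<a_H(r)$), translates the graph of $\tilde{u}\circ d$ \emph{down} by $\tilde{u}(r_1)$ and keeps only its positive part. Its zero level is then no longer the curve of curvature $\kappa$ where the graph was vertical, but an equidistant curve $\delta$ of curvature $\kappa_1=-\tanh(r+r_1)<\kappa$, along which the slope equals $\tilde{u}'(r_1)<\infty$. Since $k_{\partial\Omega}\geq\kappa>\kappa_1$, a curve of curvature $\kappa_1$ tangent to $\partial\Omega$ at $p$ does lie in $\Omega^{C}$, the translated barrier can be placed touching at $p$, its top boundary value $a_H(r)-\tilde{u}(r_1)$ still exceeds $a$ (this, not ``making the comparison strict,'' is the role of the height gap), and comparison gives the uniform bound $|\nabla u(p)|\leq \tilde{u}'(r_1)$. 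The same correction is needed in your cases (iii) and (iv): the paper uses a nodoid over a circle of radius $\rho_1<\rho_0=\coth^{-1}(-\kappa)$ shifted down by $s<0$ with $b<A_H(\rho_1)+s$, whose zero-level circle has radius $\rho_1+\varepsilon<\rho_0$ (compatible with the exterior circle condition) and where the slope is $\tilde{u}'(\varepsilon)<\infty$. Finally, your case (ii) silently excludes $\kappa=2H$: there $\tanh^{-1}(-\kappa)$ is not an admissible parameter in Lemma \ref{L2mhr}, and one must invoke \eqref{r2H} (blow-up of $a_H$) to choose $r>\tanh^{-1}(-2H)$ with $a<a_H(r)$ and $-\tanh r<2H\leq k_{\partial\Omega}$ before running the argument.
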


\begin{remark}\label{rmk_FkH}
As a consequence of the above result, the function $F(\kappa,H)$ mentioned in the Introduction can be computed as 
$$F(\kappa,H)=\left\{\begin{array}{l}
+\infty \text{ if } \kappa\geq 2H\\
a_H(\tanh^{-1}(-\kappa)) \text{ if } \kappa \in [-1,2H)\\
A_H(\coth^{-1}(-\kappa)) \text{ if } \kappa <-1. \end{array}\right.
$$
\end{remark}

\begin{proof}
The first item is a well-known result which holds also for continuous
boundary data, see Theorem 1.5 in  \cite{Sp}.

In order to prove the other cases we use barriers to obtain a priori
estimates to $\sup_{\partial \Omega }|\nabla u|$. Then, by standard elliptic
PDE theory (see \cite{GT}), the existence result holds.

Our barriers are inspired in the quarters of cylinders in $\mathbb{R}^{3}$
which were used in this same way in \cite{L} and \cite{R}.

\textbf{Proof of \ref{teo32-ii}: }
We first consider the case $\kappa \in \left( -1,2H\right).$

Here, the role of the cylinders will be played by
parts of $\mathcal{G}=\mathcal{G}\left( r,H\right) $, $r>\tanh ^{-1}\left(
-2H\right) $, where $\mathcal{G}$ is described in Lemma \ref{L2mhr}.
Nevertheless, since the sets $\left(\mathbb{H}^{2}\times \left\{ t\right\} \right)\cap 
\mathcal{G}$, $t\geq 0$, are hypercycles, which do not all have the same
geodesic curvature, moving them downwards requires some care. We start this
proof by describing this movement.

Let $\alpha $ be a curve in $\mathbb{H}^{2}$ oriented with normal vector $%
\eta $ and of constant geodesic curvature $\kappa $. Let $r=\mathrm{\tanh }%
^{-1}(-\kappa )$. Let $A$ be the connected component of $\mathbb{H}%
^{2}\backslash \alpha $ for which $\eta $ points and set $d\left( z\right)
=d_{\mathbb{H}^{2}}\left( z,\alpha \right) $, $z\in A.$
Let $\Lambda\subset A$ be the strip of width $z_H\left( r\right) $ bounded by $\alpha\cup \alpha^\star,$ being $z_H\left( r\right) $ given by \eqref{MHr}. Then the graph of $u=\left( \tilde{u}\circ d\right) |_\Lambda$, $\tilde{u}$ given in \eqref{bar2}, has cmc $H.$ Besides $u$ vanishes on $\alpha$ and, since  
$\tilde{u}(z_H(r))=a_H(r) >a,$ $u$ is greater than $a$ on $\alpha^\star.$

Let $r_{1}>0$ be such that $\tilde{u}(r_{1})+a<a_H(r) $.
Define $w_1:\Lambda \rightarrow \mathbb{R}$ by $w_1
(z)=\left( \tilde{u}\circ d\right) \left( z\right) -\tilde{u}(r_{1})$ and let $w$ be the
restriction of $w_1$ to the subset of $\Lambda $ given by $\left\{
z\in \Lambda ;w_1\left( z\right) \geq 0\right\} $. Since $u$ is an
increasing function depending on the distance to $\alpha $, the domain of $w$
is bounded by two curves equidistant to $\alpha $: $\delta $ of distance $
r_{1}$ (geodesic curvature $-\tanh (r+r_{1})=\kappa _{1}$) on which $w$
vanishes and $\alpha^\star$ of distance $z_H(r)$ on which $w$ is greater than $
a $. The graphs of functions $w$ well located in $\mathbb{H}^{2}$ will work
as our barriers. Since 
\begin{equation*}
k_{\partial \Omega }\geq \kappa =-\tanh \left( r\right) >-\tanh \left(
r+r_{1}\right) =\kappa _{1},
\end{equation*}%
for each $p\in \partial \Omega $ there is a curve $\delta _{p}$ tangent to $%
\partial \Omega $ at $p$, contained in $\Omega ^{C}$ and of constant
geodesic curvature $\kappa _{1}$ (oriented by $\eta _{\delta }$ such that $%
\eta _{\delta }(p)=\eta _{\partial \Omega }(p)$). Let $w_{p}: \Lambda_p\to\R$ be the barrier
described above that depends on the distance to $\delta _{p}$. 

We claim that
if $u\in C^{2,\alpha }(\overline{\Omega })$ is a solution to \eqref{DP},
then $u\leq w_{p}$ in the domain $\Omega _{p}$ where both functions are
defined.

Notice that  $\partial \Omega _{p}=\Gamma _{1}\cup \Gamma
_{2}$, where $\Gamma _{1}=\partial \Omega_p \cap \partial\Omega$ and $%
\Gamma _{2}=\partial \Omega _{p}\cap\partial \Lambda_p$. We have 
$u|_{\Gamma _{1}}=0\leq w_{p}|_{\Gamma _{1}}$
and, since $\sup_{\Omega }|u|<a$ we have $u|_{\Gamma _{2}}<a<w_{p}|_{\Gamma
_{2}}$. Then $u|_{\partial \Omega _{p}}\leq w_{p}|_{\partial \Omega _{p}}$. Now, since $u\left(p\right) =w_{p}\left( p\right) $ and $\sup_{\overline{\Omega }%
_{p}}\left\vert \nabla w_{p}\right\vert \leq \tilde{u}(r_{1})<\infty $, we
conclude that $w_{p}$ is supersolution relatively to the domain $\Omega _{p}.$ So, standard elliptic PDE theory guarantees the
existence of a solution $u\in C^{2,\alpha }(\overline{\Omega })$ to the
Dirichlet problem \eqref{DP}.

Relatively to the case $\kappa =2H$, assuming an a priori height estimate $a,$ we see from \eqref{r2H} that there is $r>\tanh ^{-1}\left( -2H\right) $ such that $0<a<a_H(r) $. For such $r$, as $-\tanh r\leq 2H=\kappa $, we can proceed as above and this concludes the proof of item \ref{teo32-ii}.

\textbf{Proof of \ref{teo32-iii}: } Set $\rho _{0}=\coth ^{-1}\left( -\kappa \right) $.
Since $\Omega $ satisfies the exterior circle condition of radius $\rho _{0}$%
, given $p\in \partial \Omega $ there is $q\in \mathbb{H}^{2}\backslash \overline{%
\Omega }$ and a circle $C_{\rho _{0}}\left( q\right) \subset \mathbb{H}^{2}\backslash 
\overline{\Omega }$ such that $C_{\rho _{0}}\left( q\right) $ is tangent to $%
\partial \Omega $ at $p$.

Since $\rho \mapsto A_H(\rho)$ is a continuous increasing function, there are $\rho_1<\rho_0$ and $s<0$ satisfying $b<A_H(\rho_1) +s$ and $\rho_1+\varepsilon<\rho_0.$ The positive number $\varepsilon=\varepsilon(H,\rho_1)$ is given by $\varepsilon=\tilde{u}^{-1}(-s)$ for $\tilde{u}:[0,Z_H(\rho_1)] \to \R$ defined in \eqref{bar3}:
$$\tilde{u} (d) =\int_{0}^{d}\frac{s_{H}\left( \rho_1,t\right)}{\sqrt{1-[s_H(\rho_1,t)]^2}} dt.$$ 

By Lemma \ref{L3nhr}, setting $d\left( z\right) =d_{\mathbb{H}^{2}}\left( z,C_{\rho_1
}\left( q\right) \right) $,  the graph $\mathbb{G}$ of the function $u \left( z\right) =\tilde{u} \circ d\left(
z\right) +s$ defined on the annulus 
\begin{equation*}
A:=\left\{ z\in \left[ B_{\rho_1 }\left( q\right) \right] ^{C},0\leq d\left(
z\right) \leq Z_H\left( \rho_1 \right) \right\} ,
\end{equation*}%
has cmc $H$ and is contained in $%
\mathbb{H}^{2}\times \left\{ t\geq s\right\}.$ Besides 
$\mathbb{G}\cap \left(\mathbb{H}^{2}\times \left\{ 0\right\}\right)$
is a circle $C_{\rho_1+\varepsilon}\left( q\right) $ of radius $\rho_1+\varepsilon<\rho_0.$ The connected component of $\partial \mathbb{G}$ which is contained in $\mathbb{H}^{2}\times\left\{ t>0\right\} $ is 
$\mathbb{G}\cap \left( \mathbb{H}^{2}\times \left\{
A_H(\rho_1) +s\right\} \right)\subset  \mathbb{H}^{2}\times \{t>b\} .$

Let 
$\Lambda=\left\{ z\in A;\rho_1+\varepsilon \leq d\left( z\right) \leq
Z_H\left( \rho_1 \right) \right\}$ be the annulus where $u$ is positive
and set $w=u |_{\Lambda}$. We have $\sup_{ \Lambda
}\left\vert \nabla w\right\vert=\tilde{u}'(\varepsilon) <\infty $ and
\begin{equation*}
b<\sup_{\Lambda}w=A_H(\rho_1) +s.
\end{equation*}
Now, consider
the geodesic radius $\gamma _{p}$ linking $q$ to $p$, with $\gamma
_{p}\left( 0\right) =q$ and $\gamma _{p}\left( \rho _{0}\right) =p$ and
translate the graph of $w$ along $\gamma _{p}$ until its lower boundary
touches $p.$ We name $w_p$ the function that has this translated surface 
as graph and $\Lambda_p$ be the domain of $w_p.$ 

Set $\Omega_p=\Lambda_p\cap \Omega $. Note that $\partial \Omega_p=\Gamma _{1}\cup \Gamma _{2}$ where 
$\Gamma _{1}=\partial \Omega_p\cap \partial \Omega$ and $\Gamma _{2}=\partial
\Omega_p\cap \partial \Lambda_p.$
If $u$ is a solution of \eqref{DP}, then $u|_{\Gamma _{1}}=0\leq w_p|_{\Gamma
_{1}}$. On the other hand, by hypothesis, $u|_{\Gamma _{2}}<b<w_p|_{\Gamma _{2}}.$ It follows that $w_p$ is a supersolution
relatively to the domain $\Omega_p$, that is, $u\leq w_p$ in $\Omega_p$. The result follows, now, from the fact that $\sup_{\Omega_p}\left\vert \nabla w_p\right\vert <\infty $.

\textbf{Proof of \ref{teo32-iv}: } Since $\kappa =-1$, $\Omega $ satisfies the exterior
circle condition for all $\rho >0$. From Proposition \ref{P_conjs} it
follows that there is $\rho $ large enough such that $b<A_H\left(
\rho\right) $. Now, just proceed as in the case \ref{teo32-iii}.
\end{proof}

We observe that in Theorem 3 of \cite{R}, the height estimate does not
depend on $k_{\partial \Omega }$, it only requires $\Omega $ to be convex.
Here, instead of assuming $\Omega $ convex, we require $k_{\partial \Omega
}\geq -1$ and an analogous result holds.

\begin{corollary}
\label{C0}Let $\Omega \subset \mathbb{H}^{2}$ be a bounded $C^{2,\alpha }$
domain and let $H\in (0,1/2)$ be given. If $k_{\partial \Omega }\geq -1$ and
any solution $u$ to \eqref{DP} satisfies the a priori height estimate $
\sup_{\Omega }|u|<a_H(\infty)$, where $a_H(\infty)$ is given by \eqref{hNinf}, then the
Dirichlet problem \eqref{DP} has a solution $u\in C^{2,\alpha }(\overline{
\Omega }).$
\end{corollary}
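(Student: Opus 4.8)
The plan is to obtain the corollary as an immediate consequence of Theorem \ref{MT}, by running a case analysis on the single quantity $\kappa := \inf_{\partial\Omega} k_{\partial\Omega}$. The hypothesis $k_{\partial\Omega} \geq -1$ forces $\kappa \geq -1$, so the exterior-circle branch \ref{teo32-iii} of that theorem (which requires $\kappa < -1$) never occurs, and hence no further geometric assumption on $\Omega$ is needed. Since $\Omega$ is a bounded $C^{2,\alpha}$ domain and $H\in(0,1/2)$, the standing hypotheses of Theorem \ref{MT} already hold, so the only thing left to verify in each range of $\kappa$ is that the assumed bound $\sup_\Omega|u| < a_H(\infty)$ fits inside the admissible window of the relevant item.

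The key structural input is Proposition \ref{P_conjs}: the function $a_H$ is decreasing with $\lim_{r\to\infty} a_H(r) = a_H(\infty)$, whence $a_H(r) > a_H(\infty)$ for every finite $r$ in its domain. Thus $a_H(\infty)$ is a universal threshold lying strictly below all the comparison values $a_H(\tanh^{-1}(-\kappa))$. I would then argue as follows. If $\kappa > 2H$, item \ref{teo32-i} applies directly and needs no height estimate. If $-1 < \kappa \leq 2H$, then $r := \tanh^{-1}(-\kappa)$ is finite (with $a_H(r)=+\infty$ at the endpoint $\kappa = 2H$, by \eqref{r2H}), so $a_H(\infty) < a_H(r)$; taking $a := a_H(\infty)$, which is positive for $H\in(0,1/2)$ as one reads off from \eqref{hNinf}, the hypothesis gives $\sup_\Omega|u| < a = a_H(\infty) < a_H(\tanh^{-1}(-\kappa))$, hence $\sup_\Omega|u| \leq a < a_H(\tanh^{-1}(-\kappa))$ and item \ref{teo32-ii} yields a solution. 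Finally, if $\kappa = -1$, the assumed estimate $\sup_\Omega|u| < a_H(\infty)$ is precisely the hypothesis of item \ref{teo32-iv}, which then produces the solution. These three ranges exhaust $\kappa \geq -1$, proving the corollary.

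The only point requiring care is the bookkeeping of strict versus non-strict inequalities, namely converting the open bound $\sup_\Omega|u| < a_H(\infty)$ into an admissible threshold $a$ (resp. $b$) for item \ref{teo32-ii} (resp. \ref{teo32-iv}). In the range $-1 < \kappa \leq 2H$ the strict gap $a_H(\infty) < a_H(\tanh^{-1}(-\kappa))$ furnished by the monotonicity leaves exactly the room to choose $a = a_H(\infty)$, as above. At the boundary case $\kappa = -1$ one transcribes the hypothesis verbatim, understanding the a priori estimate as a uniform bound strictly below $a_H(\infty)$. It is worth recording at the outset that $a_H(\infty) > 0$, so that a legitimate positive threshold exists in every case. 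With these remarks the reduction to Theorem \ref{MT} is complete, and the main obstacle is conceptual rather than computational: recognizing that the monotonicity of $a_H$ collapses the three thresholds of Theorem \ref{MT} into the single uniform bound $a_H(\infty)$, independent of $\kappa$.
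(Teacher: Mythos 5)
Your proof is correct and follows essentially the same route as the paper's: reduce to Theorem \ref{MT} by splitting on $\kappa=\inf_{\partial\Omega}k_{\partial\Omega}$, using the monotonicity of $a_H$ from Proposition \ref{P_conjs} to get $a_H(\infty)<a_H\left(\tanh^{-1}(-\kappa)\right)$ so that $a=a_H(\infty)$ serves as the uniform threshold in item \ref{teo32-ii}, with item \ref{teo32-iv} covering $\kappa=-1$. The only difference is cosmetic: you handle $\kappa>2H$ explicitly via item \ref{teo32-i} and the endpoint $\kappa=2H$ via \eqref{r2H}, whereas the paper folds all of $\kappa>-1$ into its item-\ref{teo32-ii} sentence.
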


\begin{proof}
From Proposition \ref{P_conjs}, $a_H(r)$ given by
\eqref{hHS} is decreasing with $r$ and converges to $a_H(\infty)$ as $r \to \infty.$
Set $r=\tanh ^{-1}\left( -\kappa \right) $ if $\kappa >-1$. Then $%
\sup_{\Omega }|u|<a_H(r) $ and the result follows
from item \ref{teo32-ii} of Theorem \ref{MT}. If $\kappa =-1$, the result is item
\ref{teo32-iv} of Theorem \ref{MT}.
\end{proof}

\begin{corollary}
\label{C3n} Let $H\in (0,1/2)$ and $\Omega \subset \mathbb{H}^{2}$ a $
C^{2} $ domain be given. Set $\kappa =\inf_{\partial \Omega }k_{\partial
\Omega }$. If $\kappa \in (-1,2H)$, let $\delta_H\left( r\right) $ be as defined in \eqref{dHr}, where $r=\tanh ^{-1}\left( -\kappa \right) $. It follows that if 
$\overline{\Omega}$ is contained in a region bounded by two hypercycles $\delta_H(r)$ far apart, one of them of geodesic curvature $\kappa $ when
oriented with the normal vector pointing to $\Omega $, then the Dirichlet
problem \eqref{DP} has a solution in $C^{2}\left( \Omega \right) \cap
C^{0}\left( \overline{\Omega }\right).$
\end{corollary}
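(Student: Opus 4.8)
The plan is to deduce the statement from Theorem \ref{MT}, item \ref{teo32-ii}, in the bounded $C^{2,\alpha}$ case, and then to remove the boundedness and the loss of regularity by an exhaustion. Set $r=\tanh^{-1}(-\kappa)$; since $\kappa\in(-1,2H)$ we have $r>\tanh^{-1}(-2H)$, so Lemma \ref{L2mhr} applies with this $r$. By hypothesis $\overline{\Omega}$ lies in a region $R$ bounded by two hypercycles $\delta_H(r)$ apart, one of them, say $\alpha$, with geodesic curvature $\kappa=-\tanh r$ and normal pointing into $\Omega$. This is precisely the configuration of Lemma \ref{L2mhr}: the graph $\mathcal{G}=\mathcal{G}(r,H)$ is an $H$-graph over $R$ whose height function $\tilde u\circ d$, with $d=d_{\hh}(\cdot,\alpha)$ and $\tilde u$ given by \eqref{bar2}, vanishes on $\partial R$, is non-negative, and attains its maximum $a_H(r)$ on the intermediate hypercycle $\{d=z_H(r)\}$. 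If $u$ solves \eqref{DP}, then $\tilde u\circ d\ge 0=u$ on $\pO$ and both solve $Q_H=0$, so the comparison principle gives $u\le \tilde u\circ d$ on $\Omega$, whence $\sup_\Omega u\le a_H(r)$. On the other hand the constant $0$ is a subsolution, $Q_H(0)=2H>0$, and $0=u$ on $\pO$, so $u\ge 0$ in $\Omega$. Thus every solution satisfies $0\le u\le a_H(r)$.

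To invoke Theorem \ref{MT} \ref{teo32-ii} I still need the strict bound $\sup_\Omega|u|\le a$ for a single $a<a_H(r)$. When $\Omega$ is bounded it is a proper subdomain of $R$, so $\pO\neq\partial R$ and the tangency (strong maximum) principle prevents $u$ from touching $\tilde u\circ d$ at an interior point; hence $u<\tilde u\circ d$ in $\Omega$. Turning this pointwise inequality into a uniform gap is done by applying the interior gradient estimates for $H$-graphs (which depend only on the height bound $a_H(r)$ and on $H$) together with the Harnack inequality to the non-negative solution $v=\tilde u\circ d-u$ of the linearized equation, which is bounded away from $0$ on the portion of $\pO$ lying in the interior of $R$. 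With such an $a$, Theorem \ref{MT} \ref{teo32-ii} produces a solution of \eqref{DP} on any bounded $C^{2,\alpha}$ subdomain of $R$ whose boundary curvature is $\ge\kappa$.

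For a merely $C^{2}$, possibly unbounded, $\Omega$ I would exhaust it by $\Omega_n=\Omega\cap B_n$, where $B_n$ is a geodesic ball of radius $n$, and round the (transversal, hence convex) corners inward while approximating $\pO$ by $C^{2,\alpha}$ arcs, obtaining bounded $C^{2,\alpha}$ domains $\Omega_n'\subset\Omega$ with $\Omega_n'\nearrow\Omega$. Since $\partial B_n$ has geodesic curvature $\coth n>1>\kappa$, neither the cut nor the rounding lowers the curvature below $\kappa$, so $\inf_{\partial\Omega_n'}k\ge\kappa-\varepsilon_n$ with $\varepsilon_n\to 0$; as $\overline{\Omega_n'}\subset R$, the barrier bound of the first paragraph and the continuity of $a_H(\tanh^{-1}(-\,\cdot\,))$ give, for $n$ large, $\sup_{\Omega_n'}u\le a<a_H(\tanh^{-1}(-(\kappa-\varepsilon_n)))$, so the previous step yields a solution $u_n$ of \eqref{DP} on $\Omega_n'$. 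The uniform bound $0\le u_n\le a_H(r)$ and the interior gradient estimates give $C^{2}_{\mathrm{loc}}(\Omega)$ control, and a diagonal argument extracts a subsequence converging to a solution $u$ of $Q_H=0$ in $\Omega$ with $0\le u\le a_H(r)$. Finally, at each $p\in\pO$ the upper barrier $w_p$ built in the proof of Theorem \ref{MT} \ref{teo32-ii}, defined independently of $n$, pinches $0\le u_n\le w_p$ with $w_p(p)=0$, forcing $u_n\to 0$ uniformly near $\pO$; hence $u\in C^{0}(\overline{\Omega})$ with $u|_{\pO}=0$. This exhaustion follows the scheme used in \cite{L,R} for Euclidean strips.

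The \emph{main obstacle} is the limiting argument for unbounded $\Omega$: one must keep the curvature lower bound $\kappa$ along the exhaustion (ensured by cutting with geodesic balls, whose boundary is more curved than $\alpha$) and, above all, recover the zero boundary data in the limit, which needs the boundary barriers $w_p$ to be controlled uniformly in $n$. This uniform control is exactly what the strict height gap $a<a_H(r)$ provides, so securing that gap uniformly over all solutions, rather than only pointwise from the tangency principle, is the crux of the argument.
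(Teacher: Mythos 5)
Your overall architecture is the same as the paper's: compare any solution with the barrier $\tilde u\circ d$ of Lemma \ref{L2mhr}, feed the resulting a priori height estimate into Theorem \ref{MT}~\ref{teo32-ii} on a bounded $C^{2,\alpha}$ exhaustion, and recover the zero boundary data in the limit via the barriers $w_p$ from the proof of Theorem \ref{MT}. (You skip the paper's truncated strips $\Lambda_k$ and the Perron solutions on them, which for this corollary is a harmless simplification.) The genuine gap is in the step you yourself identify as the crux: producing a \emph{single} constant $a<a_H(r)$ that bounds \emph{every} solution. Your mechanism --- tangency principle plus interior gradient estimates plus Harnack applied to $v=\tilde u\circ d-u$ --- does not deliver it. Harnack is an interior estimate: it compares values of $v$ on a compact $K\subset\subset\Omega$ with one another, but it has no way to import the boundary lower bound $v|_{\pO}=\tilde u\circ d\geq\varepsilon$ into the interior; and the tangency principle gives only a pointwise strict inequality, with no quantitative content. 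The tool that actually works is the minimum principle: $v$ satisfies a linear elliptic equation with no zeroth-order term, hence $\inf_{\Omega}v=\inf_{\pO}v\geq\varepsilon$, where $\varepsilon:=\inf_{\overline\Omega}\,\tilde u\circ d>0$ whenever $\overline\Omega$ sits strictly inside the strip. Equivalently --- and this is exactly what the paper does --- translate the barrier down: $u_1-\varepsilon$ (with $u_1=\tilde u\circ d$, see \eqref{bar2}) is still a solution of $Q_H=0$ and still positive on $\overline\Omega$, so comparison gives $\sup u\leq a_H(r)-\varepsilon$ for every solution on every subdomain, with one fixed $\varepsilon$.

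This missing uniform $\varepsilon$ then undermines your unbounded case. You run the gap argument separately on each rounded domain $\Omega_n'$, so you only obtain gaps $\varepsilon_n$ that may shrink to $0$ as $n\to\infty$. But the barriers $w_p$ in the proof of Theorem \ref{MT}~\ref{teo32-ii} require a choice of $r_1$ with $\tilde u(r_1)<a_H(r)-a$, and their gradient bound $\tilde u^{\prime}(r_1)$ blows up as the gap closes (recall $\tilde u^{\prime}(d)\to+\infty$ as $d\to 0$); hence your assertion that the $w_p$ are ``defined independently of $n$'' presupposes precisely the uniform gap you have not established, and without it the limit function need not attain the zero boundary values. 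The paper avoids this by fixing the single $\varepsilon$ coming from the hypothesis $\overline\Omega\subset\Lambda_1$ once and for all, \emph{before} any exhaustion, which makes both the height bound $a_H(r)-\varepsilon$ and the boundary gradient bounds uniform in $n$. With the $\varepsilon$-shifted barrier in place of your Harnack step, the rest of your argument (cutting with geodesic balls, curvature bookkeeping, diagonal compactness) goes through and coincides with the paper's proof.
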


Since an explicit expression is always nicer, we remark that Lemma \ref{L2mhr} shows that $2z_H(r)<\delta_H(r)$ and therefore if $\overline{\Omega}$ is contained in a region bounded by two hypercycles $$2\left[\sinh ^{-1}\left( \sinh r+\frac{\cosh r}{2H}\right) -r\right]$$ far apart, one of them of geodesic curvature $\kappa=\tanh(-r) $ when oriented with the normal vector pointing to $\Omega,$ then the existence result also holds.

\begin{proof}
Let $\alpha_1$ be the hypercycle equidistant to a geodesic $\zeta $ mentioned in the statement. It follows
that the distance, with sign, from $\zeta $ to $\alpha_1 $, is $r$. Let $d$ be
the distance function to $\alpha_1 $ in the connected component $A$ of $\mathbb{H}%
^{2}\backslash \alpha_1 $ which contains $\Omega $ and let $u_1=\tilde{u}\circ d$ be as defined in Lemma \ref{L2mhr}, \eqref{bar2}. In Lemma \ref{L2mhr}, we proved that for $\beta_1\subset A$ the curve equidistant $\delta_H(r)>2z_H(r)$ from $\alpha_1,$ it holds that $\beta_1=\left( G\left( u_1\right) \backslash
\alpha_1 \right) \cap \left(\mathbb{H}^{2}\times\{0\}\right)$ and $u_1\geq 0$ in $\Lambda_1,$ for $\Lambda_1$ the connected set bounded by $\alpha_1 \cup \beta_1.$ 
From hypothesis, $\overline{\Omega}\subset \Lambda_1,$ $u_1$ vanishes on $\partial\Lambda_1$ and depends on the distance to the boundary. Therefore there is $\varepsilon>0,$ such that $u_1-\varepsilon>0$ in $\Omega.$ We denote by $u$ the function $u_1-\varepsilon$ and by $\alpha$ and $\beta$ the curves that bound the set $\{u>0\},$ which are equidistant to $\zeta$ and satisfy: $\alpha$ is in the region bounded by $\alpha_1\cup \beta$ and $\beta$ is in the region bounded by $\alpha \cup \beta_1.$ We observe that $\sup_{\Lambda} u=\sup_{\Lambda_1} \left( u_1- \varepsilon\right)=a_H(r)-\varepsilon.$

Fix a point $p_{0}=\zeta(0)\in \zeta $. Given $k\in \mathbb{N}$, let $\Gamma _{\pm k}$ be geodesics orthogonal to $\zeta $, intercepting $
\zeta $ at $\zeta(\pm k).$ Notice that such geodesics are also
orthogonal to $\alpha $ and $\beta$. For $k\in \mathbb{N},$ set $p_{\alpha ,\pm k}:=\alpha \cap \Gamma _{\pm k}$ and $\alpha_k$ the part of $\alpha$ from $p_{\alpha ,-k}$ to $p_{\alpha ,k}.$ Analogously we define $p_{\beta ,\pm k}\in \beta \cap \Gamma _{\pm k}$ and $\beta_k.$ Let $R_k$ be the `rectangle' with these four vertices.
Consider the hyperbolic circles of diameter the segment $\overline{p_{\alpha ,k}p_{\beta
,k}}$, which are tangent to $\alpha $ and $\beta $ at $p_{\alpha ,k}$
and $p_{\beta ,k}$ and also the ones of diameter $\overline{p_{\alpha ,-k}p_{\beta
,-k}}$. Let $\lambda _{k}$ be the semicircles of
such circles with extremes $p_{\alpha ,k}$ and $p_{\beta ,k},$ such that $\lambda_k\subset \left({R_k}\right)^C$ and analogously define $\lambda_{-k}$. They have curvature
greater than $1$ when oriented with the normal pointing to $R_k.$ Now consider the bounded $C^{1}$ domain $\Lambda _{k}$ whose
boundary is the curve%
\begin{equation*}
\partial \Lambda_k=\lambda _{k}\cup \alpha_k \cup \lambda
_{-k}\cup \beta_k .
\end{equation*}%
Given $p\in \partial \Lambda_k$, if $p\in \alpha \cup \beta $, define $w_{p}:\overline{%
\Lambda }_{k}\longrightarrow \mathbb{R}$ by $w_{p}\left( z\right) =u\left(
z\right) $, where $u$ is the function described in the beginning of this proof. If $p\in
\lambda _{k}\cup \lambda _{-k}$, since $k_{\alpha_1}<1 <\min \left\{
k_{\lambda _{k}},k_{\lambda _{-k}}\right\} $, we can take a curve $%
\alpha _{p}$ tangent to $\partial \Lambda_k$ at $p$, of constant curvature $k_{\alpha
_{p}}=\tanh(r)$ when oriented with the normal pointing to $\Lambda _{k}$. Let 
$d_{\alpha _{p}}$ be the distance function to $\alpha _{p}$ in the connected
component of $\mathbb{H}^{2}\backslash \alpha _{p}$ that contains $\Lambda
_{k}$ and, using \eqref{bar2} set
\begin{equation}
f(s)=\int_{0}^{s}\frac{c_{H}\left( r,t\right)}{\sqrt{1-[c_H(r,t)]^2}} dt\, 
\end{equation}%
$0\leq s:=d_{\alpha _{p}}\left( z\right) \leq z_H\left( r\right) $ and 
\begin{equation}
\overline{\Lambda }_{k,p}=\left\{ z\in \overline{\Lambda }_{k};d_{\alpha
_{p}}\left( z\right) \leq z_H\left( r\right) \right\} .  \label{Lpk}
\end{equation}
Now, define $w_p:\overline{\Lambda }_{k}\longrightarrow \mathbb{R}$ by%
\begin{equation*}
w_{p}\left( z\right) =\left\{ 
\begin{array}{l}
\min \left\{ f( d_{\alpha _{p}}\left( z\right) ),u\left( z\right)
\right\} \text{, if }z\in \overline{\Lambda }_{k,p} \\ 
u\left( z\right) \text{, if }z\in \overline{\Lambda }_{k}\backslash\overline{\Lambda }%
_{k,p}.
\end{array}%
\right. % \label{supwp}
\end{equation*}%

Observe that, if $y\in \partial\Lambda_{k,p}\cap \Lambda_k,$ then $f( d_{\alpha _{p}}\left( y \right) )=f\left( z_H(r)\right)=a_H(r)\geq u(y).$ Hence for any $z\in \Lambda_k,$ there is $U,$ a neighborhood of $z,$ such that either  $w_p\equiv \min \left\{ f\circ d_{\alpha _{p}},u\right\}$ with both functions $f\circ  d_{\alpha _{p}}$ and $u$ well defined in $U$ or $w_p\equiv u$ in $U.$ Since these functions have cmc graphs, we conclude that $w_p$ is a supersolution relatively to the PDE $
Q_{H}\left( u\right) =0  \text{ in }\Lambda_k.$

It follows that, for all $p\in \partial \Lambda_k$, $w_p\geq 0$ in $\overline{%
\Lambda }_{k}$, $w_p\left( p\right) =0$ and therefore $w_p$ is a supersolution relatively to the Dirichlet problem%
\begin{equation}
\left\{\begin{array}{lll}
Q_{H}\left( u\right) =0  \text{ in }\Lambda_k\\
u|_{\partial \Lambda _{k}} =0\\
u\in C^{2}\left( \Lambda _{k}\right) \cap
C^{0}\left( \overline{\Lambda }_{k}\right). 
\end{array}\right.\label{DPL}
\end{equation}
Since the constant function $\phi =0$ in $\overline{\Lambda }_{k}$ is a
subsolution to the Dirichlet problem \eqref{DPL}, from the Perron method,
setting%
\begin{equation*}
S=\left\{ \phi \in C^{0}\left( \overline{\Lambda }_{k}\right) ;\phi \text{
is a subsolution of \eqref{DPL}}\right\} ,
\end{equation*}%
we obtain that%
\begin{equation}
v \left( z\right) =\sup \left\{ \phi \left( z\right) \, |\,\phi
\in S\right\}, \, z\in \overline{\Lambda }_{k}  \label{solLk}
\end{equation}%
is a solution of the Dirichlet problem \eqref{DPL} and, moreover, from the maximum
principle, $\left\vert v \right\vert _{0}\leq a_H(r)-\varepsilon.$

We first suppose $\Omega $ bounded.

Since $\Omega $ is bounded, there is $k\in \mathbb{N}$ such that $\Omega
\subset \Lambda _{k}$. Taking into account such $\Lambda _{k}$, consider a
sequence of bounded $C^{2,\alpha }$ domains $\Omega _{n}$, with $\overline{\Omega
_{n}}\subset \Omega _{n+1}$ and $k_{\partial \Omega _{n}}\geq \kappa $ for
all $n\in \mathbb{N}$, satisfying%
\begin{equation*}
\Omega =\bigcup\limits_{n=1}^{+\infty }\Omega _{n}
\end{equation*}%
Since each $\Omega _{n}$ is included in $\Lambda _{k}$, if $u\in C^{2}\left(
\Omega _{n}\right) $ satisfies $Q_{H}\left( u\right) =0$, $u|_{\partial
\Omega _{n}}=0$, it follows from the maximum principle that $\left\vert
u\right\vert _{0}\leq a_H(r)-\varepsilon.$ Then, by Theorem \ref{MT}, there is $u_{n}\in
C^{2,\alpha }\left( \overline{\Omega }_{n}\right) $ such that $Q_{H}\left(
u_{n}\right) =0$ and $u_{n}|_{\partial \Omega _{n}}=0$. It follows from
standard compactness results (see \cite{GT}) the existence of a subsequence
of $\left( u_{n}\right) $,
converging uniformly on compact subsets of $\Omega$ to $u\in C^{2}\left( \Omega
\right) $ satisfying $Q_{H}\left( u\right) =0$ in $\Omega.$ Besides, from the proof of  Theorem \ref{MT}, there is $M>0,$ such that $\left\vert \nabla u_{n}\right\vert<M$ in $\overline{\Omega_n}$ for all $n \in \mathbb{N}.$ Therefore $u\in C^0(\overline{\Omega})$  and $u|_{\partial \Omega }=0$.

\bigskip

Now, suppose $\Omega $ unbounded.

Consider a sequence of $C^{2}$ bounded domains $\overline{\Omega _{j}}$, $j\in \mathbb{N%
}$, with $k_{\partial \Omega _{j}}\geq \kappa $, such that $\Omega
_{j}\subset \Omega _{j+1}$ and%
\begin{equation*}
\Omega=\bigcup\limits_{j=1}^{+\infty }\Omega _{j},
\end{equation*}%
Notice that, for each $j$ there is $k$ such that $\Omega _{j}\subset \Lambda
_{k}$. Thus, we obtain a subsequence of $\left( \Lambda _{k}\right) $, which
we denominate $\left( \Lambda _{k_{j}}\right) $, with $\Lambda
_{k_{j}}\subset \Lambda _{k_{j+1}}$, such that for each $j$, $\Omega
_{j}\subset \Lambda _{k_{j}}$. Then, according to the bounded case, there is
for each $j\in \mathbb{N}$, a solution $u_{j}\in C^{2}\left( \Omega
_{j}\right) \cap C^{0}\left( \overline{\Omega }_{j}\right) $ of \eqref{DP}. Standart compactness results imply that $\left( u_{j}\right) 
$ has a subsequence, that we name $\left( u_{j}\right) $ again, converging
uniformly on compact subsets of $\Omega $ to a solution $u\in C^{2}\left( \Omega
\right) $ to $Q_{H}=0$ in $\Omega.$ 
By using the barriers as in the proof of Theorem \ref{MT}, we conclude that the norm of the gradient of $u_j$ is uniformly bounded and since
$u_{j}|_{\partial \Omega _{j}}=0$ for all $j$, it follows that $u\in
C^{2}\left( \Omega \right) \cap C^{0}\left( \overline{\Omega }\right) $ and $
u|_{\partial \Omega }=0.$
\end{proof}

\begin{lemma}
\label{LcomMN}Let $H\in \left( 0,1/2\right),$ $r>\tanh^{-1}(2H)$ and $\rho >0$ be given. There are positive numbers $R=R(H,r),$   $\varrho =\varrho \left( H,\rho \right) $ and $\ell
=\ell \left( H\right) $, such that%
\begin{equation*}
h_H(R)=a_H(r),\,
h_H(\varrho)=A_H(\rho)
\text{ and } h_H(\ell)=a_H(\infty)
\end{equation*}%
where $h_H$, $a_H,$ $A_H$ and $a_H(\infty) $ are given by \eqref{hG}, \eqref{hHS}, \eqref{hNod}, and \eqref{hNinf}, respectively. Moreover $\ell$ is the solution of 
\begin{eqnarray}
\ln\left[\left(\frac{\sqrt{1-4H^2}+\sqrt{1-4H^2\tanh^2(\ell)}}{\sqrt{1-4H^2}+1}\right)\cosh(\ell)\right] \nonumber \\
=\frac{\pi \sqrt{1-4H^2} }{4H}-2\tanh ^{-1}\left( \frac{%
1-2H}{\sqrt{1-4H^{2}}}\right). \label{iH}
\end{eqnarray}
\end{lemma}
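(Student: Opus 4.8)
The plan is to realize each of the three equalities as a single application of the intermediate value theorem to the function $h_H$, and then to read off the closed-form equation \eqref{iH} for $\ell$ by a direct substitution. First I would record the qualitative behaviour of $h_H$ on $[0,\infty)$. The cleanest route uses the proof of Lemma \ref{dgeo}: the profile $\psi$ of $\mathfrak{G}(l,H)$ is decreasing in the distance $d$ to the geodesic and vanishes on $\alpha_1\cup\alpha_2$, so its maximum occurs at $d=0$, giving
\begin{equation*}
h_H(l)=\psi(0)=2H\int_0^l\frac{\tanh s}{\sqrt{1-4H^2\tanh^2 s}}\,ds,
\end{equation*}
which is exactly the closed form \eqref{hG}. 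From this representation it is immediate that $h_H$ is continuous, that $h_H(0)=0$, and that $h_H$ is strictly increasing on $(0,\infty)$, since the integrand is positive there. Moreover the integrand tends to $(1-4H^2)^{-1/2}>0$ as $s\to\infty$, so the integral diverges and $h_H(l)\to+\infty$; hence $h_H$ is a continuous strictly increasing bijection of $[0,\infty)$ onto $[0,\infty)$. (One could instead differentiate \eqref{hG} directly, but the integral form makes the monotonicity transparent.)

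Next I would check that the three target values are finite and positive. For $r>\tanh^{-1}(2H)>\tanh^{-1}(-2H)$ the number $a_H(r)$ from \eqref{hHS} is the height of the compact graph $\mathcal{G}(r,H)$, hence finite and positive; similarly $A_H(\rho)$ from \eqref{hNod} is finite and positive for every $\rho>0$; and $a_H(\infty)$ from \eqref{hNinf} is the common finite positive limit of $a_H$ and $A_H$ established in Proposition \ref{P_conjs}. Since each of these lies in $(0,\infty)=h_H\big((0,\infty)\big)$, the intermediate value theorem yields positive numbers $R=R(H,r)$, $\varrho=\varrho(H,\rho)$ and $\ell=\ell(H)$ with $h_H(R)=a_H(r)$, $h_H(\varrho)=A_H(\rho)$ and $h_H(\ell)=a_H(\infty)$; strict monotonicity of $h_H$ makes each of them unique.

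Finally, for the explicit equation it remains to substitute the closed forms into $h_H(\ell)=a_H(\infty)$. Writing out \eqref{hG} at $l=\ell$ and \eqref{hNinf}, this relation reads
\begin{equation*}
\frac{2H}{\sqrt{1-4H^2}}\ln\left[\left(\frac{\sqrt{1-4H^2}+\sqrt{1-4H^2\tanh^2(\ell)}}{\sqrt{1-4H^2}+1}\right)\cosh(\ell)\right]=\frac{\pi}{2}-\frac{4H}{\sqrt{1-4H^2}}\tanh^{-1}\left(\frac{1-2H}{\sqrt{1-4H^2}}\right);
\end{equation*}
multiplying through by $\sqrt{1-4H^2}/(2H)$ and simplifying the right-hand side gives precisely \eqref{iH}. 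There is no real obstacle here beyond bookkeeping; the only point genuinely needing care is the divergence $h_H(l)\to+\infty$, which is what guarantees that every positive height is attained, so that $R$, $\varrho$ and $\ell$ exist for all admissible $r$ and $\rho$.
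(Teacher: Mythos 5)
Your proof is correct and takes essentially the same approach as the paper: the paper's entire argument is the one-line observation that $h_H:[0,\infty)\to[0,\infty)$ given by \eqref{hG} is an increasing homeomorphism, which is precisely what you establish (via the integral representation $h_H(l)=2H\int_0^l \tanh s\,(1-4H^2\tanh^2 s)^{-1/2}\,ds$) before invoking the intermediate value theorem and substituting to obtain \eqref{iH}. You have simply supplied the details that the paper leaves implicit.
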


\begin{proof}
The result follows immediately from the fact that $h_H:[0,\infty)\to [0,\infty)$ in
\eqref{hG} is an increasing homeomorphism.
\end{proof}

\begin{corollary}
\label{C1} Let $H\in (0,1/2)$ and $\Omega \subset \mathbb{H}^{2}$ a $
C^{2} $ domain be given. Set $\kappa =\inf_{\partial \Omega }k_{\partial
\Omega }$. \begin{enumerate}[label=(\arabic{enumi})]

\item \label{corC1-iv}If $\kappa \geq 2H$ and $\overline{\Omega} $ is contained in a region bounded by two hypercycles equidistant to a fixed geodesic, then the Dirichlet problem \eqref{DP} has
a solution in $C^{2}\left( \Omega \right) \cap C^{0}\left( \overline{\Omega }
\right).$
\item \label{corC1-i}If $\kappa \in (-1,2H)$, for $r=\tanh ^{-1}\left( -\kappa \right),$ let $R=R(H,r)$ be as defined in Lemma \ref{LcomMN}. If 
$\overline{\Omega}$ is contained in a region bounded by two hypercycles
equidistant $R$ to a fixed geodesic, then the Dirichlet
problem \eqref{DP} has a solution in $C^{2}\left( \Omega \right) \cap C^{0}\left( \overline{\Omega }\right).$
\item \label{corC1-ii}If $\kappa <-1$, for $\rho =\coth ^{-1}\left( -\kappa \right),$
let $\varrho =\varrho \left(H,\rho \right) $ be as defined in Lemma \ref{LcomMN}. If 
$\overline{\Omega}$ is contained in a region bounded by two hypercycles
equidistant $\varrho$ to a fixed geodesic and $\Omega $ satisfies the exterior circle condition of radius $\rho,$ then the Dirichlet
problem \eqref{DP} has a solution in $C^{2}\left( \Omega \right) \cap
C^{0}\left( \overline{\Omega }\right). $
\item \label{corC1-iii}If $\kappa =-1,$ suppose $\overline{\Omega}$ contained in a region bounded by two hypercycles which are equidistant $\ell $ to a fixed geodesic, where $\ell =\ell\left( H\right) $ is given by \eqref{iH},
then the Dirichlet problem \eqref{DP} has solution in $C^{2}\left( \Omega
\right) \cap C^{0}\left( \overline{\Omega }\right) $.
\end{enumerate}
\end{corollary}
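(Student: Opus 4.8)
The plan is to reduce each item to the corresponding case of Theorem \ref{MT} by producing the required \emph{a priori} height estimate, with the geodesic-symmetric $H$-graph $\mathfrak{G}$ of Lemma \ref{dgeo} playing the role of the global barrier. I would first settle the bounded case and then recover the unbounded case by the exhaustion argument already used in the proof of Corollary \ref{C3n}: write $\Omega=\bigcup_{n}\Omega_{n}$ with $\overline{\Omega_{n}}\subset\Omega_{n+1}$ and $k_{\partial\Omega_{n}}\geq\kappa$, solve over each $\Omega_{n}$, and pass to a subsequence converging uniformly on compact subsets, the height and gradient bounds being uniform in $n$ because the barriers do not depend on $n$.

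For the height estimate, suppose $\overline{\Omega}$ lies in the closed region $\mathcal{R}$ bounded by two hypercycles equidistant $l$ to a geodesic $\gamma$, and place over $\mathcal{R}$ the graph $\mathfrak{G}=\mathfrak{G}(l,H)$ of Lemma \ref{dgeo}, whose boundary $\partial\mathfrak{G}$ consists of these two hypercycles at height $0$ and whose height is $h_{H}(l)$ given by \eqref{hG}. If $u$ solves \eqref{DP}, then $u=0\leq\mathfrak{G}$ on $\pO\subset\mathcal{R}$, so the comparison principle for $Q_{H}$ gives $u\leq\mathfrak{G}$ on $\overline{\Omega}$; its reflection across $\mathbb{H}^{2}\times\{0\}$ provides the matching lower barrier, so that $\sup_{\Omega}|u|\leq h_{H}(l)$. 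I would then match heights through Lemma \ref{LcomMN}: choosing $l=R$ gives $h_{H}(R)=a_{H}(r)$ of \eqref{hHS}, choosing $l=\varrho$ gives $h_{H}(\varrho)=A_{H}(\rho)$ of \eqref{hNod}, and choosing $l=\ell$ gives $h_{H}(\ell)=a_{H}(\infty)$ of \eqref{hNinf}.

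With the estimates in hand the items follow from Theorem \ref{MT}. For \ref{corC1-i} the bound $\sup_{\Omega}|u|\leq a_{H}(r)$ with $r=\tanh^{-1}(-\kappa)$ feeds item \ref{teo32-ii}; for \ref{corC1-ii} the bound $\sup_{\Omega}|u|\leq A_{H}(\rho)$ together with the exterior circle condition of radius $\rho=\coth^{-1}(-\kappa)$ feeds item \ref{teo32-iii}; and for \ref{corC1-iii} the bound $\sup_{\Omega}|u|\leq a_{H}(\infty)$ feeds item \ref{teo32-iv}, with $\ell=\ell(H)$ the solution of \eqref{iH}. Item \ref{corC1-iv} is the easiest: for $\kappa>2H$ no height estimate is needed (item \ref{teo32-i}), while for $\kappa=2H$ the barrier only has to deliver a \emph{finite} bound $h_{H}(l)<\infty$, which is automatically admissible since $a_{H}(r)\to+\infty$ as $r\to\tanh^{-1}(-2H)^{+}$ by \eqref{r2H}; this is precisely the device used at the end of the proof of item \ref{teo32-ii}.

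The delicate point is that Theorem \ref{MT} requires the height bound to be \emph{strictly} below the critical value, whereas the plain comparison above yields only $\sup_{\Omega}|u|\leq h_{H}(l)$, with equality possible when $\gamma$ meets $\overline{\Omega}$, since $\mathfrak{G}$ attains $h_{H}(l)$ precisely along $\gamma$. I expect this to be the main obstacle. To remove it I would either note that when $\overline{\Omega}$ sits in the open strip of a slightly smaller half-width $l'<l$ the comparison already gives $\sup_{\Omega}|u|\leq h_{H}(l')<h_{H}(l)$, because $h_{H}$ is an increasing homeomorphism (Lemma \ref{LcomMN}), or invoke the tangency principle: as $u=0<\mathfrak{G}$ on $\pO$, the graphs $G(u)$ and $\mathfrak{G}$ cannot touch, so $u<\mathfrak{G}$ throughout $\Omega$ and the estimate is strict. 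Carrying this through the exhaustion $\Omega_{n}$, each compactly contained and hence lying in a strictly narrower strip, makes the strict bound uniform in $n$, which is what the application of Theorem \ref{MT} to each $\Omega_{n}$ demands.
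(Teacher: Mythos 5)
Your overall route is the paper's own: reduce to Theorem \ref{MT}, with the graphs $\mathfrak{G}$ of Lemma \ref{dgeo} as barriers, the widths calibrated through Lemma \ref{LcomMN}, and the exhaustion of Corollary \ref{C3n} at the end. Your direct comparison of a solution with the function defining $\mathfrak{G}$ (which is an exact solution of $Q_H=0$ on the whole strip) merely bypasses the $\Lambda_k$/Perron bookkeeping that the paper reuses from Corollary \ref{C3n}; that streamlining is legitimate.

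The genuine gap is your closing uniformity claim. Write $l$ for the calibrated half-width ($R$, $\varrho$ or $\ell$), so that $h_H(l)=F(\kappa,H)$ by Lemma \ref{LcomMN}. Compact containment does place each $\Omega_n$ in a strip of half-width $l_n<l$, and the per-$n$ bound $h_H(l_n)<F(\kappa,H)$ does allow Theorem \ref{MT} to be applied to each $\Omega_n$; but $l_n\uparrow l$, so the gaps $F(\kappa,H)-h_H(l_n)$ shrink to zero, and the bounds are not uniform in $n$ in any useful sense. Uniformity is needed not to solve on each $\Omega_n$ but to pass to the limit: in the proof of Theorem \ref{MT} the boundary gradient estimate for a solution with height bound $a$ is of order $\tilde{u}'(r_1)$, where $r_1$ is chosen with $\tilde{u}(r_1)<a_H(r)-a$, and since $\tilde{u}'(0^+)=+\infty$ this constant blows up as the gap closes. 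So your constants $M_n$ controlling $|\nabla u_n|$ may diverge, and the final step of the exhaustion --- the paper's single $M$ with $|\nabla u_n|<M$ for all $n$, which is exactly what yields $u\in C^0(\overline{\Omega})$ and $u|_{\pO}=0$ --- collapses: the limit could a priori fail to attain the zero boundary data. The tangency-principle alternative is weaker still: it gives $\sup_\Omega u<F(\kappa,H)$ separately for each solution, but no single $a<F(\kappa,H)$ valid for all solutions, which is what the hypothesis of Theorem \ref{MT} literally demands.

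The repair is your first option applied to $\Omega$ itself rather than to the $\Omega_n$'s: read the hypothesis as the paper implicitly does in Corollary \ref{C3n} (the step asserting the existence of $\varepsilon>0$ with $u_1-\varepsilon>0$ in $\Omega$, i.e.\ translating the barrier downwards by a fixed $\varepsilon$), namely that $\overline{\Omega}$ stays at positive distance from the two bounding hypercycles. Then one fixed downward translate of $\mathfrak{G}$ gives the single bound $h_H(l)-\varepsilon<F(\kappa,H)$ for every $n$, and uniform gradient estimates follow. Note the issue only affects items \ref{corC1-i}, \ref{corC1-ii} and \ref{corC1-iii}, where the calibration makes the comparison bound exactly critical; item \ref{corC1-iv} is safe, since there any finite bound is admissible, as you observe. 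Finally, a smaller omission: in item \ref{corC1-ii} the exhausting domains must also be chosen to satisfy the exterior circle condition of radius $\rho$, since item \ref{teo32-iii} of Theorem \ref{MT} requires it; the paper flags this point explicitly.
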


\begin{proof}

The proofs of all cases in this corollary follow the same steps as the proof of Corollary \ref{C3n}. We start by constructing the domains $\Lambda_k,$ $k\in \mathbb{N}.$ Then using the graphs $\mathfrak{G}=\mathfrak{G}\left( l,H\right)$ described in Lemma \ref{dgeo} translated downwards, we construct supersolutions $w_p$ for all $p\in \partial \Lambda_k.$ Then Theorem \ref{MT} implies the existence of a solution $v:\Lambda_k\to \R,$ with $|v|_0<F(\kappa,H),$ $F$ defined in Remark \ref{rmk_FkH}.

Then the same exaustion $\Omega=\cup_n\Omega_n$ described in the proof of Corollary \ref{C3n} must be done. We should only observe that in item  \ref{corC1-ii}, it is possible to assume that each $\Omega_n$ satisfies the same exterior circle condition as $\Omega.$ With this exaustion, the last steps of the previous proof work.
\end{proof}

\begin{flushleft}
\textsc{Departamento de Matem\'atica}

\textsc{Universidade Federal de Santa Maria}

\textsc{Av. Roraima 1000, Santa Maria RS, 97105-900, Brazil}

\textit{Email:}a.aiolfi@gmail.com; patricia.klaser@ufsm.br 
\end{flushleft}

\end{document}